\mathchardef\ogon="012C%
\newcommand{\as}{a\kern-0.22em\lower.40ex\hbox{$_{\ogon}$}}
\newcommand{\As}{A\kern-0.22em\lower.40ex\hbox{$_{\ogon}$}}
\newcommand{\es}{e\kern-0.24em\lower.40ex\hbox{$_{\ogon}$}}
\newcommand{\Es}{E\kern-0.22em\lower.40ex\hbox{$_{\ogon}$}}
\newtheorem{theorem}{Theorem}[section]
\newtheorem{corollary}[theorem]{Corollary}
\newtheorem{definition}[theorem]{Definition}
\newtheorem{example}[theorem]{Example}
\newtheorem{lemma}[theorem]{Lemma}
\newtheorem{proposition}[theorem]{Proposition}
\newtheorem{remark}[theorem]{Remark}
\newenvironment{proof}[1][Proof]{\noindent\textbf{#1.} }{\ \rule{0.5em}{0.5em}}
\def\qed{\hbox to 0pt{}\hfill$\rlap{$\sqcap$}\sqcup$}
\newcommand{\hilbertX}{{\mathcal X}}
\newcommand{\hilbertH}{{\cal H}}
\newcommand{\hilbertY}{{\mathcal Y}}
\newcommand{\dom}{{\text{dom\,}}}
\newcommand{\lagrangianL}{{\mathcal L}}
\newcommand{\barR}{\bar{\mathbb{R}}}
\numberwithin{equation}{section}
\date{}
\title{On  duality for nonconvex minimization problems within the framework of abstract convexity }
\author{Ewa Bednarczuk\thanks{Systems Research Institute, Polish Academy of Sciences, Newelska 6, 01–447 Warsaw} \ \ and Monika Syga\thanks{Warsaw University of Technology, Faculty of Mathematics and Information Science, ul. Koszykowa 75,
		00--662 Warsaw, Poland,  M.Syga@mini.pw.edu.pl}
}
\begin{document}
	\maketitle

\begin{abstract}%
By applying the perturbation function approach, we propose   the Lagrangian and the conjugate duals for  minimization problems of the sum of two, generally nonconvex, functions.  The main tools are the $\Phi$-convexity theory and  minimax theorems for $\Phi$-convex functions. We provide conditions ensuring zero duality gap and introduce  $\Phi$-Karush-Kuhn-Tucker conditions that characterize solutions to primal and dual problems. We also discuss the relationship between the dual problems introduced in the present investigation
and some conjugate-type duals existing in the literature.


\textbf{Keywords:} Abstract convexity, Minimax theorem, Lagrangian duality, Nonconvex optimization, conjugate duality, zero duality gap, Karush-Kuhn-Tucker conditions.
\medskip{}

\textbf{Mathematics Subject Classification (2000)32F17; 49J52; 49K27; 49K35; 52A01}

\end{abstract}


\section{Introduction}

Let $\hilbertX$  be  a real  vector space.
We consider  the minimization problem  of the form
\begin{equation}
	\label{problem}
	\tag{P}
	\text{Min}_{x\in \hilbertX}\ \ f(x)+g(x). 
\end{equation}
where  $f,g:\hilbertX\rightarrow (-\infty,+\infty]$.
\vspace{0.2cm}

\noindent
{\bf Our standing assumptions and notations are as follows.}
\begin{itemize}
	\item[(a)] $\Phi$ and $\Psi$ are  classes (closed under addition of real constants) of   real-valued functions $\varphi:\hilbertX\rightarrow \mathbb{R}$,  called elementary functions, of simple structure, e.g. affine, quadratic,  step functions. In the sequel, we put additional requirements on sets of elementary functions  of algebraic character  when needed, e.g. in some of constructions and theorems we assume that $\Phi$ is symmetric ($\Phi=-\Phi$), and/or $\Phi$ is additive, ($\Phi+\Phi\subset\Phi$).
	\item[(b)] A  function $f:X\rightarrow(-\infty,+\infty]$ is proper, i.e.,  the  domain of $f$ is nonempty, i.e.
	$$
	\text{dom}(f):=\{x\in X \ : \ f(x)<+\infty \}\neq \emptyset.
	$$
	and $g:X\rightarrow(-\infty,+\infty]$ is proper 
	and $\dom (f)\cap \dom (g)\neq\emptyset$. 
	\item[(c)] When  $\hilbertX$  is  a Hilbert space  the inner product is denoted by $\langle\cdot\mid\cdot\rangle$ and the associated norm  is $\|\cdot\|$.
\end{itemize}

Main tool of the present investigation is the abstract convexity theory, called $\Phi$-convexity. The origins of the $\Phi$-convexity theory goes back to the investigations of Ky  Fan \cite{kyfan2}, Moreau \cite{moreau}, and Rubinov and Kutateladze \cite{KutRub72}. Applications in optimization were investigated e.g. by Balder  \cite{balder}, Dolecki and Kurcyusz \cite{dolecki-k}, Pallaschke \& Rolewicz \cite{rolewicz}, Rubinov \cite{rubbook}, in mass transport by R\"{u}schendorf \cite{Ruschendorf}. 

The underlying idea of $\Phi$-convexity also called {\em convexity without linearity}, is to replace the classical bi-linear coupling functions used in the convex analysis by general (possibly nonlinear) coupling functions. $\Phi$-convexity  provides a framework for the analysis of important classes of nonconvex problems. In the case of bi-linear coupling,  this framework   allows for the retrieval, and sometimes refinement, of classical results of convex analysis. 

$\Phi$-convexity  provides global tools for investigating nonconvex objects and  offers a framework for  investigating global optimization problems, see, e.g. the monographs by Alexander Rubinov  \cite{rubbook}, and by  Diethard Pallaschke \& Stefan Rolewicz \cite{rolewicz}. Basic concepts of $\Phi$-convexity theory from a historical perspective, its r\^ole in global optimization and duality theory have been recently discussed  in several presentations during  the on-line WOMBAT 2020 meeting (https://wombat.mocao.org/wombat-2020/recordings/).

An important class of $\Phi$-convex functions are $\Phi_{lsc}$-convex functions defined on a Hilbert space with elementary functions defined by \eqref{philsc}. In a series of papers, \cite{Rolewiczpara}, \cite{rolewicz1979}, \cite{rolewicz2000}, Stefan Rolewicz investigated particular subclass of $\Phi_{lsc}$-convex functions, called paraconvex functions (weakly convex, semiconvex) functions.

Formulae for $\Phi$-subdifferentials, $\Phi$-conjugates,  $\Phi$-infimal convolution  for  the sum  of two functions have been studied by  Jeyakumar, Rubinov \& Wu \cite{JRW} who also provided  generalizations of the results obtained for convex problems by Burachik \& Jeyakumar in \cite{burjey}. The results of \cite{JRW} have been generalized by Bui, Burachik, Kruger \& Yost in \cite{bui2020zero} to the  sum of any finite number of functions  $f_{i}$, $i=1,...,m$.  In \cite{bui2020zero} the the dual problem $(ICD)$ is formulated on the basis  of the $\Phi$-infimal convolution of $\Phi$-convex conjugates of functions $f_{i}$ in the classes of elementary functions $\Phi$ for which $0\in\Phi$ and $\Phi+\Phi\subset\Phi$.

In the present paper, we  construct  a $\Phi$-conjugate dual $(CD)$, where $0\in\Phi$,  for the problem of minimizing the sum of two proper functions which is based on the perturbation function $p(\cdot,\cdot)$.   We calculate the  $c$-conjugate $p_{c}^{*}(\cdot,\cdot)$ with respect to a suitably chosen coupling function $c$ and we define the dual  $(CD)$ as the problem of maximizing the function $-p^{*}_{c}(0,\cdot)$. This approach coincides with the approach to conjugate duality in the convex case, see e.g. \cite{bot, Bonnans}. 

We also introduce the $\Phi$-Lagrangian function ${\mathcal L}(\cdot,\cdot)$ for which the  $\Phi$-Lagrangian dual  \eqref{lagdual} is equivalent to the  $\Phi$-conjugate dual problem $(CD)$. 

The  question of conditions ensuring zero duality gap  is approached via minimax theorems  obtained  by Syga in \cite{syga} and \cite{Syga2018} for classes of elementary functions $\Phi$ which are convex sets. In Theorem 3.1 and Theorem 4.1 of \cite{syga},   the so-called {\em intersection property} is proved to be  a necessary and sufficient condition for the minimax equality to hold.  These results allow  us to prove  that the intersection property for the $\Phi$-Lagrange function is  necessary and sufficient  for zero duality gap both in the $\Phi$-Lagrangian  and the $\Phi$-conjugate dualities.

We investigate the relationships between $\Phi$-infimal convolution dual  $(ICD)$ and the  $\Phi$-conjugate dual problem $(CD)$ zero duality gap conditions obtained in \cite{bui2020zero}.

The contribution of the paper is as follows.
\begin{enumerate} 
	\item[-] Construction of the conjugate dual $(CD)$ via the conjugate $p_{c}^{*}(\cdot,\cdot)$ of the perturbation function $p$ with respect to the  coupling function $c$ (formula \eqref{cpl1}) (Section 3 and Section \ref{section4}).
	\item[-]  Construction of the Lagrangian dual equivalent to the conjugate dual (Section 5).
	\item[-] Derivation  of conditions for zero duality gap in the form of the so-called {\em intersection property} and discussion of its relationship to respective conditions for  $\Phi$-inf convolution-based zero duality gap proved in \cite{bui2020zero} (Section 6).
	\item [-] Definition of  $\Phi$-KKT conditions and   characterization of solutions to problems $(P)$ and $(CD)$. (Section 7).
\end{enumerate}

\section{Preliminaries}
\subsection{Abstract Convexity}
Let $\Phi$ be a set  of  elementary  real-valued functions $\varphi:\hilbertX\rightarrow \mathbb{R}$ and $f:\hilbertX\rightarrow(-\infty,+\infty]$. 
The set
$$
\text{supp}_{\Phi}(f):=\{\varphi\in \Phi\ :\ \varphi\le f\}
$$
is called the {\em support} of $f$ with  respect to $\Phi$, where, 
for any $g,h:\hilbertX\rightarrow(-\infty,+\infty]$,
$g\le h\ \Leftrightarrow\  g(x)\le h(x)\ \ \forall\ x\in \hilbertX.$
We will use the notation $\text{supp}(f)$ 
whenever the class $\Phi$ is clear from the context. Elements of  class $\Phi$ are called elementary functions.

\begin{definition}(\cite{dolecki-k}, \cite{rolewicz}, \cite{rubbook})
	\label{convf}
	A function $f:\hilbertX\rightarrow
	(-\infty,+\infty]$ is called {\em $\Phi$-convex on $\hilbertX$} if
	$$
	f(x)=\sup\{\varphi(x)\ :\ \varphi\in\textnormal{supp}(f)\}\ \ \forall\ x\in \hilbertX.
	$$
	If the set $\hilbertX$ is clear from the context, we simply say that $f$ is {\em $\Phi$-convex}.
\end{definition}

\subsection{$\Phi$-subgradients}

\begin{definition} [see e.g. formula 1.1.1 of \cite{rolewicz}  and Definition 1.7 of \cite{rubbook}]
	\label{def_subgradient_gen}
	An element  $\varphi\in\Phi$ is called a {\em $\Phi$-subgradient} of a  function  $f:\hilbertX\rightarrow (-\infty,+\infty]$ at $\bar{x}\in\text{dom\,} f$, if the following inequality holds
	\begin{equation}
		\label{phi_sub}
		f(x)-f(\bar{x}) \geq \varphi(x)-\varphi(\bar{x}) \ \ \forall \ x\in \hilbertX.\end{equation}
	The set of all  $\Phi$-subgradients of $f$ at  $\bar{x}$ is denoted as $\partial_{\Phi}  f(\bar{x})$.  For characterizations of subgradients see Proposition 1.2, Proposition 1.3 of \cite{rubbook} and formula $1.1.7_{max}$ of \cite{rolewicz}.
\end{definition}

\begin{definition} [see e.g. Definition 7.8 of \cite{rubbook}, formula $1.1.1_{\varepsilon}$ of \cite{rolewicz}]
	An element  $\varphi\in\Phi$ is called a {\em $\Phi$-$\varepsilon$-subgradient} of a  function  $f:\hilbertX\rightarrow (-\infty,+\infty]$ at $\bar{x}\in\text{dom\,} f$, if the following inequality holds
	\begin{equation}
		\label{phi_sub1}
		f(x)-f(\bar{x}) \geq \varphi(x)-\varphi(\bar{x})-\varepsilon \ \ \forall \ x\in \hilbertX.
	\end{equation}
	The set of all  $\Phi$-$\varepsilon$-subgradients of $f$ at  $\bar{x}$ is denoted as $\partial^{\varepsilon}_{\Phi}  f(\bar{x})$.  If $f$ is $\Phi$-convex on $\hilbertX$, then, for every $\varepsilon>0$, the domain of the $\varepsilon$-subdifferential mapping $\text{dom}\partial^{\varepsilon}_{\Phi}  f=\{x\in\hilbertX\ \mid\ \partial^{\varepsilon}_{\Phi}  f(x)\neq\emptyset\}$ coincides with $\text{dom\,}f.$
	
\end{definition}

\subsection{$\Phi$-conjugation}
\label{conjugation}

Let $f:\hilbertX\rightarrow(-\infty,+\infty]$. The function $f^{*}_{\Phi}:\Phi\rightarrow(-\infty,+\infty]$,
\begin{equation}
	\label{conjugate} 
	f^{*}_{\Phi}(\varphi):=\sup_{x\in X} (\varphi(x)-f(x))
\end{equation}
is called the {\em $\Phi$-conjugate} of $f$ (see e.g.\cite{moreau}). The function $f^{*}_{\Phi}$ is convex whenever $\Phi$ is  convex.   For the characterisation of the epigraph of $f_{\Phi}^{*}$, see e.g. Proposition 7.8 of \cite{rubbook}.

Accordingly, the {\em $\Phi$-bi-conjugate} of $f$ is defined as
$$
f^{**}_{\Phi}(x):=\sup_{\varphi\in\Phi}(\varphi(x)-f^{*}_{\Phi}(\varphi)).
$$
The following relationships between $\varepsilon$-$\Phi$-subgradients, conjugate and biconjugate functions hold.

\begin{theorem} 
	\label{conju}
	Let $f:\hilbertX\rightarrow(-\infty,+\infty]$. 
	\begin{enumerate}
		\item[(i)] Fenchel-Moreau inequality: For every $x\in\text{dom\,}(f)$ and every $\varphi\in\Phi$
		\begin{equation} 
			\label{eq-eps_con1} 
			f(x)+f_{\Phi}^{*}(\varphi)\ge\varphi(x).
		\end{equation}
		\item[(ii)] For every $x\in\text{dom\,}(f)$ and every $\varepsilon>0$
		\begin{equation} 
			\label{eq-eps_con} 
			\varphi\in\partial_{\Phi}^{\varepsilon}f(x)\ \Leftrightarrow\ f(x)+f_{\Phi}^{*}(\varphi)\le\varphi(x)+\varepsilon.
		\end{equation}
		\item[(iii)] For every $x\in\text{dom\,}(f)$, 
		$f_{\Phi}^{**}(x)\le f(x).$
		\item[(iv)] For every  $x\in\text{dom}(f)$
		\begin{equation} 
			\label{eq-eps_con3} 
			(\forall\ \varepsilon>0)\ \partial_{\Phi}^{\varepsilon} f(x)\neq\emptyset\ \Rightarrow\ f(x)=f_{\Phi}^{**}(x).
		\end{equation}
		\item [(v)]  
		$\mbox{f is $\Phi$-convex on $\hilbertX$}\ \Leftrightarrow \ \ f(x)=f^{**}_{\Phi}(x)\ \ \forall \ \ x\in \hilbertX.$
	\end{enumerate}
	
\end{theorem}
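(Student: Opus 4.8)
The five items of Theorem~\ref{conju} are the classical Fenchel--Moreau facts transplanted to the $\Phi$-convex setting, so the plan is to prove each directly from the definitions \eqref{conjugate} of $f^*_\Phi$ and \eqref{phi_sub1} of the $\varepsilon$-$\Phi$-subgradient, in the order (i)$\to$(ii)$\to$(iii)$\to$(iv)$\to$(v), reusing the earlier items as they come. For (i), I would simply note that \eqref{conjugate} gives $f^*_\Phi(\varphi)\ge\varphi(x)-f(x)$ for every fixed $x$, which rearranges to \eqref{eq-eps_con1}; properness of $f$ is what guarantees the supremum is not $-\infty$ on $\dom(f)$, and the inequality is trivial when $f(x)=+\infty$. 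For (ii), I would unwind the definition: $\varphi\in\partial^\varepsilon_\Phi f(x)$ means $f(y)-f(x)\ge\varphi(y)-\varphi(x)-\varepsilon$ for all $y\in\hilbertX$, i.e. $\varphi(y)-f(y)\le\varphi(x)-f(x)+\varepsilon$ for all $y$; taking the supremum over $y$ turns the left-hand side into $f^*_\Phi(\varphi)$, giving $f^*_\Phi(\varphi)\le\varphi(x)-f(x)+\varepsilon$, which is exactly the right-hand side of \eqref{eq-eps_con} after adding $f(x)$; both implications are reversible, so this is an equivalence.

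For (iii), from the definition of the biconjugate $f^{**}_\Phi(x)=\sup_{\varphi\in\Phi}(\varphi(x)-f^*_\Phi(\varphi))$ and the Fenchel--Moreau inequality (i), each term $\varphi(x)-f^*_\Phi(\varphi)\le f(x)$, so taking the supremum over $\varphi$ preserves the bound; again the case $f(x)=+\infty$ is vacuous. For (iv), fix $x\in\dom(f)$ and suppose $\partial^\varepsilon_\Phi f(x)\ne\emptyset$ for every $\varepsilon>0$; pick $\varphi_\varepsilon\in\partial^\varepsilon_\Phi f(x)$. By (ii), $f^*_\Phi(\varphi_\varepsilon)\le\varphi_\varepsilon(x)-f(x)+\varepsilon$, hence $\varphi_\varepsilon(x)-f^*_\Phi(\varphi_\varepsilon)\ge f(x)-\varepsilon$, so $f^{**}_\Phi(x)\ge f(x)-\varepsilon$; letting $\varepsilon\downarrow0$ gives $f^{**}_\Phi(x)\ge f(x)$, and combined with (iii) this yields equality.

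Finally, (v): the direction $(\Leftarrow)$ is immediate, since $f^{**}_\Phi(x)=\sup_{\varphi\in\Phi}(\varphi(x)-f^*_\Phi(\varphi))$ is by construction a supremum of elementary functions (each $x\mapsto\varphi(x)-f^*_\Phi(\varphi)$ belongs to $\Phi$ because $\Phi$ is closed under addition of real constants), so if $f=f^{**}_\Phi$ then $f$ is a pointwise supremum of members of $\Phi$ lying below it, which is precisely $\Phi$-convexity. For $(\Rightarrow)$, if $f$ is $\Phi$-convex then $f(x)=\sup\{\varphi(x):\varphi\in\mathrm{supp}(f)\}$; for each $\varphi\in\mathrm{supp}(f)$ we have $\varphi\le f$, hence $f^*_\Phi(\varphi)=\sup_y(\varphi(y)-f(y))\le0$, and since $\Phi$ is closed under adding constants the function $\varphi-f^*_\Phi(\varphi)\in\Phi$ satisfies $\varphi-f^*_\Phi(\varphi)\le f$ and $(\varphi-f^*_\Phi(\varphi))(x)\ge\varphi(x)$; taking the supremum over $\varphi\in\mathrm{supp}(f)$ shows $f^{**}_\Phi(x)\ge f(x)$, and (iii) gives the reverse inequality. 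The only subtle point throughout is bookkeeping with the value $+\infty$ and the standing properness assumption, together with the repeated use of ``$\Phi$ closed under addition of real constants'' to ensure $\varphi-f^*_\Phi(\varphi)$ is again an admissible elementary function; none of the steps presents a genuine obstacle, and the argument is essentially the standard convex-analysis proof with $\Phi$ in place of the space of continuous linear functionals.
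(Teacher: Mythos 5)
Your proof is correct and, for items (i)--(iv), follows essentially the same route as the paper: (i)--(iii) read off directly from the definitions of $f^{*}_{\Phi}$, of $\partial^{\varepsilon}_{\Phi}f$ and of the biconjugate, and (iv) uses exactly the paper's argument (apply (ii), bound $f(x)$ by $f^{**}_{\Phi}(x)+\varepsilon$, let $\varepsilon\downarrow 0$, combine with (iii)). The one genuine difference is item (v): the paper does not prove it at all but cites Theorem 1.2.6 of Pallaschke--Rolewicz and Theorem 7.1 of Rubinov, whereas you give a self-contained argument via the support set, exploiting that $\Phi$ is closed under addition of real constants to replace each $\varphi\in\mathrm{supp}(f)$ by the shifted minorant $\varphi-f^{*}_{\Phi}(\varphi)$. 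That argument is sound, and its only loose end is the one you wave at in your closing remark: in the direction ``$f=f^{**}_{\Phi}$ implies $f$ is $\Phi$-convex'' you should restrict the supremum defining $f^{**}_{\Phi}$ to those $\varphi$ with $f^{*}_{\Phi}(\varphi)<+\infty$ (the other terms equal $-\infty$ and $\varphi-f^{*}_{\Phi}(\varphi)$ is then not an element of $\Phi$; if no such $\varphi$ existed, $f^{**}_{\Phi}\equiv-\infty$ could not equal $f$), and note that properness of $f$ gives $f^{*}_{\Phi}(\varphi)>-\infty$, so the shifted functions are real-valued elements of $\Phi$ lying in $\mathrm{supp}(f)$ by (i). With that bookkeeping made explicit, your version makes the theorem self-contained, at the modest cost of reproving a standard biconjugation result the paper simply outsources to the literature.
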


\begin{proof}
	\begin{enumerate} 
		\item[(i)] Follows directly from the definition of the $\Phi$-conjugate function, see e.g. Proposition 1 \cite{oettli} and Proposition 1.2.2 of \cite{rolewicz}.
		\item[(ii)] Follows from the definition of the conjugate and the $\varepsilon$-$\Phi$-subgradient, see Propositioon 1 of \cite{oettli}, Proposition 1.2.$4_{\varepsilon}$ of \cite{rolewicz} , Proposition 7.10 of \cite{rubbook}.
		\item[(iii)] Follows directly from Fenchel-Moreau inequality $(i)$.
		\item [(iv)] Let $\varepsilon>0$ and $\varphi\in\partial_{\Phi}^{\varepsilon}f(x)$. By $(ii)$ and the definition of the $\Phi$-bi-conjugate
		$$
		f(x)\le\varphi(x)-f_{\Phi}^{*}(x)+\varepsilon\le f_{\Phi}^{**}+\varepsilon.
		$$
		Since $\varepsilon$ is arbitrary, $f(x)\le f_{\Phi}^{**}(x)$. By $(iii)$, the conclusion follows.
		\item [(v)] For the proof see  Theorem 1.2.6 of \cite{rolewicz} and  Theorem 7.1 of \cite{rubbook},
	\end{enumerate}
\end{proof}

As noted in Proposition 1.2.3 of \cite{rolewicz}, 
the space $\hilbertX$  induces on $\Phi$ the family of functions $x:\Phi\rightarrow\mathbb{R}$ defined as $x(\varphi)=\varphi(x).$ This family of functions is also denoted by $\hilbertX$.
Hence, for any $\bar{x}\in\hilbertX$, $\bar{\varphi}\in\Phi$, we write $\bar{x}\in\partial_{\hilbertX}f_{\Phi}^{*}(\bar{\varphi})$ if
\begin{equation}
	\label{subgradientx}
	f_{\Phi}^{*}(\varphi)-f_{\Phi}^{*}(\bar{\varphi})\ge \varphi(\bar{x})-\bar{\varphi}(\bar{x})\ \ \ \ \text{for all  } \varphi\in\Phi.
\end{equation}
The following proposition holds (see also Proposition 1 of \cite{oettli}).
\begin{proposition}
	\label{you}
	Let $f:\hilbertX\rightarrow(-\infty,+\infty]$ be a $\Phi$-convex function.	Let $\bar{\varphi} \in \Phi$ and $\bar{x}\in \hilbertX$. The following conditions are equivalent.
	\begin{description}
		\item{(i)} $
		f(\bar{x})+f^*_{\Phi}(\bar{\varphi})=\bar{\varphi}(x)
		$.
		\item{(ii)} $\bar{\varphi} \in \partial_{\Phi}f(\bar{x})$.
		\item(iii)  $\bar{x}\in\partial_{\hilbertX} f^*(\bar{\varphi})$.
	\end{description}
	
\end{proposition}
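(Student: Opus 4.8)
The plan is to prove the two equivalences $(i)\Leftrightarrow(ii)$ and $(i)\Leftrightarrow(iii)$ separately, in each case by rewriting the relevant subgradient inequality as one half of a Fenchel--Young-type equality and then supplying the opposite inequality for free from a Fenchel--Moreau inequality.

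\medskip

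For $(i)\Leftrightarrow(ii)$, I would simply unwind Definition~\ref{def_subgradient_gen}. The condition $\bar\varphi\in\partial_\Phi f(\bar x)$ reads $f(x)-f(\bar x)\ge\bar\varphi(x)-\bar\varphi(\bar x)$ for all $x\in\hilbertX$, which, since $\bar x\in\dom f$, is equivalent to $\bar\varphi(x)-f(x)\le\bar\varphi(\bar x)-f(\bar x)$ for all $x$, and taking the supremum over $x$ this is equivalent to $f^*_\Phi(\bar\varphi)\le\bar\varphi(\bar x)-f(\bar x)$. By the Fenchel--Moreau inequality, Theorem~\ref{conju}(i), the reverse inequality $f^*_\Phi(\bar\varphi)\ge\bar\varphi(\bar x)-f(\bar x)$ always holds; hence $\bar\varphi\in\partial_\Phi f(\bar x)$ if and only if $f(\bar x)+f^*_\Phi(\bar\varphi)=\bar\varphi(\bar x)$, i.e. $(i)$. (This simultaneously shows $f^*_\Phi(\bar\varphi)$ is finite, so all the rearrangements are legitimate.)

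\medskip

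For $(i)\Leftrightarrow(iii)$, I would use the coupling between $\Phi$ and the functions $x\colon\Phi\to\mathbb{R}$, $x(\varphi)=\varphi(x)$. By the definition of the bi-conjugate, $f^{**}_\Phi(\bar x)=\sup_{\varphi\in\Phi}\bigl(\varphi(\bar x)-f^*_\Phi(\varphi)\bigr)\ge\bar\varphi(\bar x)-f^*_\Phi(\bar\varphi)$, which is a Fenchel--Moreau inequality on the $\Phi$-side; and since $f$ is $\Phi$-convex, Theorem~\ref{conju}(v) gives $f^{**}_\Phi(\bar x)=f(\bar x)$, so this inequality is exactly $f(\bar x)+f^*_\Phi(\bar\varphi)\ge\bar\varphi(\bar x)$. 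On the other hand, \eqref{subgradientx} says that $\bar x\in\partial_\hilbertX f^*_\Phi(\bar\varphi)$ means $f^*_\Phi(\varphi)-f^*_\Phi(\bar\varphi)\ge\varphi(\bar x)-\bar\varphi(\bar x)$ for all $\varphi\in\Phi$, equivalently $\varphi(\bar x)-f^*_\Phi(\varphi)\le\bar\varphi(\bar x)-f^*_\Phi(\bar\varphi)$ for all $\varphi$, equivalently (supremum over $\varphi$) $f^{**}_\Phi(\bar x)\le\bar\varphi(\bar x)-f^*_\Phi(\bar\varphi)$, i.e. $f(\bar x)+f^*_\Phi(\bar\varphi)\le\bar\varphi(\bar x)$. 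Combining this with the Fenchel--Moreau inequality just noted yields equality, i.e. $(i)$, and conversely.

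\medskip

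No step here is a genuine obstacle: the argument is a matter of correctly unwinding the three definitions (the $\Phi$-subgradient of $f$, the $\hilbertX$-subgradient of $f^*_\Phi$, and the Fenchel--Young equality) and invoking Theorem~\ref{conju}. The only points requiring care are finiteness of the terms (guaranteed by $\bar x\in\dom f$, which is implicit in $(i)$ and explicit in the definition of $\partial_\Phi f(\bar x)$) and noting that $\Phi$-convexity of $f$ is used only in the equivalence with $(iii)$, through the identity $f=f^{**}_\Phi$.
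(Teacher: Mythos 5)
Your argument is correct and takes essentially the same route as the paper: for $(i)\Leftrightarrow(iii)$ you do exactly what the paper does---rewrite the inequality \eqref{subgradientx}, take the supremum over $\varphi\in\Phi$, use $f=f^{**}_{\Phi}$ from Theorem \ref{conju}$(v)$, and close with the Fenchel--Moreau inequality. The only difference is cosmetic: you prove $(i)\Leftrightarrow(ii)$ directly from Definition \ref{def_subgradient_gen} and Theorem \ref{conju}$(i)$, whereas the paper cites Proposition 7.7 of \cite{rubbook} and Proposition 1.2.4 of \cite{rolewicz} for that equivalence (and, as you observe, $\Phi$-convexity of $f$ is indeed needed only for the equivalence with $(iii)$).
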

\begin{proof}
	The equivalence between $(i)$ and $(ii)$ was proved in \cite{rubbook}, Proposition 7.7 and \cite{rolewicz}, Proposition 1.2.4. We show the equivalence between $(i)$ and $(iii)$.
	
	Assume that $(i)$ holds. Then, by $\Phi$-convexity of $f$, and Theorem \ref{conju}$(v)$, we get
	$$
	f^{**}_{\Phi}(\bar{x})+f^*_{\Phi}(\bar{\varphi})=\bar{\varphi}(x),
	$$
	which is equivalent to
	$$
	\varphi(x)-f^{*}_{\Phi}(\varphi)+f^*_{\Phi}(\bar{\varphi})\leq \bar{\varphi}(x) \ \ \ \ \forall \ \ \ \varphi \in \Phi,
	$$
	i.e.  $\bar{\varphi}\in \partial_{X}f_{\Phi}^{*}(\bar{x})$.
	
	Assume that $(iii)$ holds. We have the following inequality
	$$
	\bar{\varphi}(\bar{x}) \geq \varphi(\bar{x})-f^{*}_{\Phi}(\varphi)+f^*_{\Phi}(\bar{\varphi})  \ \ \ \ \forall \ \ \ \varphi \in \Phi.
	$$
	Taking the supremum over $\varphi\in \Phi$ we obtain
	$$
	\bar{\varphi}(\bar{x}) \geq \sup_{\varphi\in\Phi}\{\varphi(\bar{x})-f^{*}_{\Phi}(\varphi)\}+f^*_{\Phi}(\bar{\varphi})  \ \ \ \ \forall \ \ \ \varphi \in \Phi,
	$$
	which is equivalent to
	$$
	\bar{\varphi}(\bar{x}) \geq f(\bar{x}) +f^*_{\Phi}(\bar{\varphi})  \ \ \ \ \forall \ \ \ \varphi \in \Phi.
	$$
	This, together with the Fenchel-Moreau inequality, gives $(i)$.
\end{proof}

\section{Perturbation  function and its conjugate.}

Let $\hilbertX$ and $\hilbertY$ be  real linear vector spaces.
The perturbation function $p:\hilbertX\times\hilbertY\rightarrow(-\infty,+\infty]$ related to problem $(P)$
is defined  as
\begin{equation}
	\label{funkcjaper}
	p(x,y):= f(x) + g(x+y).
\end{equation}
Clearly, $p(x,0)=f(x)+g(x)$. 

In this section we investigate the conjugate $p^{*}$ to the perturbation function $p$. The obtained formulae, will be used in Section \ref{section4} to define the conjugate dual to  problem $(P)$.


Let $\Phi$ and $\Psi$ be two classes of elementary functions defined on $\hilbertX$ and $\hilbertY$, respectively, where $\hilbertH$ is a real linear subspace, $\hilbertX+\hilbertY\subset \hilbertH$.
To define the conjugate $p^{*}$ we introduce the coupling function $c$ on the Cartesian product $\Phi\times\Psi$, 
$c:(\Phi\times\Psi)\times(\hilbertX\times \hilbertY)\rightarrow\mathbb{R}$
as follows
\begin{equation}
	\tag{$cpl$}
	\label{cpl1}
	c((\varphi,\psi), (x,y)):=\varphi(x)+\psi(x+y)-\psi(x).
\end{equation}
In other words, elementary functions $c=(\varphi,\psi):\hilbertX\times \hilbertY\rightarrow\mathbb{R}$ defined on the product $(\Phi\times\Psi)$ are of the form

\begin{equation}
	\tag{$c$}
	\label{c1}
	c=(\varphi,\psi)(x,y):=\varphi(x)+\psi(x+y)-\psi(x).
\end{equation}
By $(\Phi\times \Psi)^{c}$ we denote the Cartesian product
$\Phi\times \Psi$ equipped with the coupling \eqref{cpl1}.  For other  coupling functions defined on the Cartesian product  $\Phi\times \Psi$  see \cite{oettli}.

Clearly, if the class $\Psi$ consists of all affine functions, i.e. $\psi(y)=\langle w,y\rangle +d$, $w\in\hilbertX^{\circ}$ and $\hilbertX^{\circ}$ is the algebraic dual of $\hilbertX$, $d\in\mathbb{R}$, then 
$$
c((\varphi,\psi), (x,y)):=\varphi(x)+\psi_{0}(y),
$$
where $\psi_{0}(y)=\langle w,y\rangle$, which is the standard bi-linear coupling, see e.g. \cite{bot,Bonnans}. 

The  conjugate $p_{c}^{*}:(\Phi\times \Psi)^{c}\rightarrow(-\infty,+\infty]$  with respect to the coupling \eqref{cpl1}, i.e. with respect to the set of elementary functions $(\Phi\times \Psi)^{c}$, is given as
\begin{equation}
	\label{conjp}
	p_{c}^*(\varphi,\psi):=\sup\limits_{x,y\in \hilbertX}\{ \varphi(x)+\psi(x+y)-\psi(x)-p(x,y)\}.
\end{equation}

Clearly, the  conjugate $p_{c}^{*}$ depends  on the choice of the coupling between the Cartesian products $\hilbertX\times\hilbertY$ and $\Phi\times\Psi$ ( for other definitions of nonlinear couplings see e.g. \cite{oettli}.) 
In the sequel we simplify the notation and put $p^{*}:=p_{c}^{*}$

Now,
$$
p^{*}(\varphi,\psi)\begin{array}[t]{l}
	=\sup\limits_{x\in\hilbertX}\sup\limits_{y\in\hilbertX}\{\varphi(x)+\psi(x+y)-\psi(x)-f(x)-g(x+y)\}\\
	=\sup\limits_{x\in\hilbertX}\varphi(x)-f(x)-\psi(x)+\sup\limits_{y\in\hilbertX}\{\psi(x+y)-g(x+y)\}.
\end{array}
$$
By putting $z:=x+y$ we obtain 
\begin{equation} 
	\label{conjmain}
	p^*(\varphi,\psi)\begin{array}[t]{l}
		=\sup\limits_{x\in \hilbertX}\varphi(x)-f(x)-\psi(x) +\sup\limits_{z\in \hilbertX}\{ \psi(z)-g(z)\}\\
		=\sup\limits_{x\in \hilbertX}\varphi(x)-f(x)-\psi(x) +g_{\Psi}^{*}(\psi).
	\end{array}
\end{equation}


\begin{enumerate}
	\item[(P1)] When  $\Psi=\Phi$, $0\in\Phi$, and $\hilbertX=\hilbertY$,
	by \eqref{conjmain}, for any $\varphi\in\Phi$
	\begin{equation} 
		\label{conjmain0}
		p^{*}(0,\varphi)\begin{array}[t]{l}
			=\sup\limits_{x\in\hilbertX}-f(x)-\varphi(x)+g_{\Phi}^{*}(\varphi)\   \\
			={}^{*}f_{\Phi}(\varphi)+g_{\Phi}^{*}(\varphi),
		\end{array}
	\end{equation}
	where ${}^{*}f_{\Phi}(\varphi):=\sup\limits_{x\in\hilbertX}-f(x)-\varphi(x)$. Clearly, ${}^{*}f_{\Phi}(\varphi)=f_{\Phi}^{*}(-\varphi)$ whenever $-\varphi\in\Phi$. When  $\Phi$ is a convex set, then $p^{*}(0,\cdot): \Phi\rightarrow(-\infty,+\infty]$ is convex.
	\item[(P2)] When $\hilbertX=\hilbertY$ is a Hilbert space and $\Phi=\Phi_{lsc}=\Psi$, where
	\begin{equation} 
		\label{philsc} 
		\Phi_{lsc}:=\{\varphi:\hilbertX\rightarrow\mathbb{R}\ \mid\ \varphi(x):=-a\|x\|^{2}+\langle v,x\rangle+c,\ a\ge 0,\ v\in\hilbertX,\ c\in\mathbb{R}\},
	\end{equation}
	by \eqref{conjmain}, for any $\varphi\in\Phi$, 
	\begin{equation} 
		\label{conjmain1}
		p^*(0,\varphi)\begin{array}[t]{l}
			=\sup\limits_{x\in \hilbertX}-f(x)+a\|x\|^{2}-\langle v,x\rangle -c +g_{\Phi_{lsc}}^{*}(\varphi)\\
			=\sup\limits_{x\in \hilbertX}-f(x)+a\|x\|^{2}-\langle v,x\rangle +g_{\Phi_{lsc}}^{*}(\varphi_{1})\\
			=\sup\limits_{x\in \hilbertX}-\tilde{f}(x)-\langle v,x\rangle +g_{\Phi_{lsc}}^{*}(\varphi_{1})\\
			=\tilde{f}_{\Phi_{lsc}}^{*}(0,-v) +g_{\Phi_{lsc}}^{*}(a,v)=p^{*}(0,\varphi_{1})\\
		\end{array}
	\end{equation}
	where $\varphi_{1}(x):=-a\|x\|^{2}+\langle v,x\rangle$ and $\tilde{f}(x):=\tilde{f}_{\varphi_{1}}(x)=f(x)-a\|x\|^{2}$ and we identify functions from the class $\Phi_{lsc}$ of the form $\varphi_{1}$ with pairs $(a,v)$, $a\in \mathbb{R}_{+}$, $v\in\hilbertX$. 
	By \eqref{conjmain1},    the domain of $p^{*}(0,\cdot)$ can be restricted to elementary  functions of the form $\varphi_{1}$ (with $c=0$). 
	As previously, 
	${}^{*}f_{\Phi_{lsc}}(\varphi_{1}):=\sup\limits_{x\in\hilbertX} -\varphi_{1}(x)-f(x)$. Clearly,
	${}^{*}f(\varphi_{1})={}^{*}f(a,v)=\tilde{f}^{*}(0,-v)$.
	By Proposition 6.3 of \cite{rubbook}  $\tilde{f}$ is $\Phi_{lsc}$-convex whenever $f$ is.
	Moreover, $p^{*}(0,\cdot)$ is a convex function on $\Phi_{lsc}$.

	\item[(P3)] 
	When $\Phi$ is symmetric, i.e., $\Phi=-\Phi$ with $(-\varphi)(x):=-\varphi(x)$ we have
	\begin{equation} 
		\label{psym}
		p^{*}(0,\varphi)=-f_{\Phi}^{*}(-\varphi)-g_{\Phi}^{*}(\varphi).
	\end{equation}
	Consider now $\Phi=\Phi_{conv}$, where $\hilbertX$ is a Banach space with the dual $\hilbertX^{*}$,
	\begin{equation} 
		\label{psiconv}
		\Phi_{conv}:=\{\varphi:\hilbertX\rightarrow \mathbb{R}\ |\ \varphi(x)=\langle v,x\rangle+c, \ v\in\hilbertX^{*},\ c\in\mathbb{R}\}.
	\end{equation}
	By \eqref{conjmain0}, 
	\begin{equation} 
		\label{conj1a}
		p^*(0,\varphi)\begin{array}[t]{l}
			=\sup_{x\in \hilbertX}-f(x)-\langle v, x\rangle -c +c+g^{*}(v)\\
			=f^{*}(-v) +g^{*}(v)=p^*(0,\varphi_{0}),
		\end{array}
	\end{equation}
	where $g^{*}(v):=\sup_{y\in\hilbertX}\langle v,y\rangle-g(y)$ is the conjugate to $g$ in the sense of convex analysis, $\varphi_{0}(y):=\langle v,y\rangle$.  By \eqref{conj1a},   we can  restrict the domain of $p^{*}(0,\cdot)$ to linear functionals.
\end{enumerate}

\section{The $\Phi$-conjugate dual}
\label{section4}

Following  the classical (convex) approach (see e.g.  Bo\c{t} \cite{bot} and Bonnans, Shapiro \cite{Bonnans}), we introduce
the  conjugate dual to $(P)$ by the formula
\begin{equation} 
	\label{gencondual}
	\tag{GCD}
	\text{Max\,}_{\psi\in\Psi} -p^*_{c}(0,\psi).
\end{equation}

\begin{enumerate}
	\item[(D1)] When $\hilbertX=\hilbertY$, $\Psi=\Phi$, $0\in\Phi$, and the coupling $c$ is given by \eqref{cpl1}, by \eqref{conjmain0}, 
	the  $\Phi$-conjugate dual  \eqref{gencondual} takes the form
	\begin{equation} 
		\label{condual3}
		\tag{CD}
		\text{Max\,}_{\varphi\in\Phi}-{}^{*}f_{\Phi}(\varphi)-g_{\Phi}^{*}(\varphi).
	\end{equation}
	where 
	${}^{*}f_{\Phi}(\varphi)=\sup\limits_{x\in\hilbertX}-f(x)-\varphi(x)$.  For dual problems resulting from other coupling functions $c$ see e.g. \cite{oettli}.

	
	\item [(D2)] When $0\in\Phi$  and the set $\Phi$ is symmetric, i.e. $\Phi=-\Phi$, problem \eqref{condual3} takes the form
	\begin{equation} 
		\label{condual3a}
		\tag{$CD^{sym}$}
		\text{Max\,}_{\varphi\in\Phi}-f_{\Phi}^{*}(-\varphi)-g_{\Phi}^{*}(\varphi).\end{equation}
	(c.f. Corollary 5.2 of \cite{JRW}.) 
	
	\item [(D3)] When  $0\in\Phi$,  and $\Phi+\Phi\subset\Phi$ and $\Phi=-\Phi$  the problem \eqref{condual3a}  becomes  the $\Phi$-infimal convolution dual  \eqref{hoabui} as introduced in \cite{bui2020zero}, 
	\begin{equation} 
		\label{hoabui}
		\tag{$ICD$}
		\text{Max\,}_{\varphi_{1},\varphi_{2}\in\Phi,\ \varphi_{1}+\varphi_{2}=0}-f_{\Phi}^{*}(\varphi_{2})-g_{\Phi}^{*}(\varphi_{1}).
	\end{equation}
	In general, when $\Phi$ is not symmetric we have
	\begin{equation}
		\label{cd}
		val\eqref{condual3}\geq val\eqref{condual3a}.
	\end{equation}
\end{enumerate}

\begin{example}
	\begin{enumerate} 
		\item Let $\hilbertX$ be a Hilbert space and $\Phi=\Phi_{lsc}$. 
		The $\Phi_{lsc}$-conjugate dual  \eqref{condual3} takes the form
		\begin{equation} 
			\label{conjugate_dual1}
			\tag{$CD^{lsc}$}
			\text{Max\,}_{(a,w)\in\Phi_{lsc}} -(\tilde{f}_{(a,w)})_{\Phi_{lsc}}^{*}(0, -w)-g_{\Phi_{lsc}}^{*}(a,w),
		\end{equation}
		where  functions of the form $\varphi_{1}(x):=-a\|x\|^{2}+\langle w,x\rangle$  are identified with  pairs $(a,w)$, $w\in\hilbertX$, $a\in\mathbb{R}_{+}$ and $\tilde{f}_{\varphi_{1}}(x):=\tilde{f}_{(a,w)}(x)=f(x)-a\|x\|^{2}$ (according to \eqref{conjmain1} we can neglect  constants).  Since  $\Phi_{lsc}\neq-\Phi_{lsc}$, the $\Phi_{lsc}$-conjugate dual does not coincide, in general, with $\Phi_{lsc}$-infimal convolution dual \eqref{hoabui}, see Example \ref{example1} below.

		\item When $\hilbertX$ is a Banach space, and $\Phi=\hilbertX^{*}$, 
		the $\hilbertX^{*}$-conjugate dual \eqref{condual3} becomes
		the classical  {\em Fenchel dual}
		\begin{equation} 
			\label{fencheldual}
			\tag{FD}
			\text{Max\,}_{v\in \hilbertX^{*}} -f^{*}(-v)-g^{*}(v).
		\end{equation}
		\eqref{fencheldual} coincides with \eqref{condual3a} and the $\hilbertX^{*}$-infimal convolution dual \eqref{hoabui}. 
	\end{enumerate}
\end{example}

\subsection{Weak (conjugate) duality}
Let $\hilbertX$ be a real linear space. 
%
By  \eqref{conjmain0}, for every $x\in \hilbertX$ and $\varphi\in\Phi$
$$
p(x,0)+p^{*}(0,\varphi)\begin{array}[t]{l}
	=p(x,0)+{}^{*}f(\varphi)+g_{\Phi}^{*}(\varphi),\\
	\ge f(x)+g(x)- f(x)-\varphi(x)+\varphi(x)-g(x)\\
	=0
\end{array}
$$
In consequence,
\begin{equation}
	\label{pdineq}
	p(x,0)\ge-p^{*}(0,\varphi) \ \ \ x\in \hilbertX\ \ \ \varphi\in\Phi
\end{equation}
which yields the {\em weak duality} 
\begin{equation}
	\label{weak_duality} 
	val(P):=\inf\limits_{x\in \hilbertX}p(x,0)=\inf\limits_{x\in X} f(x)+g(x)\ge\sup\limits_{\varphi\in\Phi}- p^{*}(0,\varphi)=:val(CD).
\end{equation}

The problem of zero duality  gap will be addressed  in Section \ref{zeroduality}.

\section{Lagrangian dual}
\label{seclag}
In this section we introduce the $\Phi$-Lagrangian function \eqref{lagrange}  with the $\Phi$-Lagrangian dual  equivalent  to the $\Phi$-conjugate dual \eqref{condual3}.

For problem \eqref{problem}, 
we consider  the $\Phi$-Lagrangian $\lagrangianL:\hilbertX\times\Phi\rightarrow\barR$ defined as

\begin{equation} 
	\label{lagrange}
	\tag{L}
	{\mathcal L}(x,\varphi):=f(x)+\varphi(x)-g_{\Phi}^{*}(\varphi)
\end{equation}
with the $\Phi$-Lagrangian primal  
\begin{equation} 
	\tag{LP}
	\label{lagprimal}
	\inf\limits_{x\in \hilbertX}\sup\limits_{\varphi\in\Phi}{\cal L}(x,\varphi)
\end{equation}
and the $\Phi$-Lagrangian dual 
\begin{equation} 
	\tag{LD}
	\label{lagdual}
	\sup\limits_{\varphi\in\Phi}\inf\limits_{x\in \hilbertX} {\mathcal L}(x,\varphi).
\end{equation}
Then
\begin{equation} 
	\label{lag_primal_eq}
	\sup\limits_{\varphi\in\Phi}{\mathcal L}(x,\varphi)
	=f(x)+\sup\limits_{\varphi\in\Phi}\varphi(x)-g_{\Phi}^{*}(\varphi)
	=f(x)+g_{\Phi}^{**}(x).
\end{equation}

\begin{proposition} 
	\label{lagprimalequality}
	If $g:\hilbertX\rightarrow\barR$ is $\Phi$-convex on $\hilbertX$,  the $\Phi$-Lagrangian primal \eqref{lagprimal}  is equivalent to  \eqref{problem},  i.e.,
	\begin{equation} 
		\label{eqprimal} 
		\inf\limits_{x\in \hilbertX} f(x)+g(x)=\inf\limits_{x\in \hilbertX}\sup\limits_{\varphi\in\Phi}{\mathcal L}(x,\varphi),
	\end{equation}
\end{proposition}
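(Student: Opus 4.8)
The plan is to chain together the computation already performed in \eqref{lag_primal_eq} with the characterization of $\Phi$-convexity via the bi-conjugate supplied by Theorem \ref{conju}$(v)$. Concretely, starting from the right-hand side of \eqref{eqprimal}, I would first invoke \eqref{lag_primal_eq}, which gives
\[
\inf\limits_{x\in\hilbertX}\sup\limits_{\varphi\in\Phi}{\mathcal L}(x,\varphi)=\inf\limits_{x\in\hilbertX}\bigl(f(x)+g_{\Phi}^{**}(x)\bigr).
\]
Then, since $g$ is assumed $\Phi$-convex on $\hilbertX$, Theorem \ref{conju}$(v)$ yields $g_{\Phi}^{**}(x)=g(x)$ for every $x\in\hilbertX$, so that $f(x)+g_{\Phi}^{**}(x)=f(x)+g(x)$ pointwise, and taking the infimum over $x\in\hilbertX$ on both sides finishes the argument.

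In a little more detail, the key steps in order are: (1) recall the definition \eqref{lagrange} of ${\mathcal L}$ and compute the inner supremum $\sup_{\varphi\in\Phi}{\mathcal L}(x,\varphi)$ for fixed $x$, pulling $f(x)$ out of the supremum since it does not depend on $\varphi$, which is exactly \eqref{lag_primal_eq}; (2) identify the resulting supremum $\sup_{\varphi\in\Phi}\bigl(\varphi(x)-g_{\Phi}^{*}(\varphi)\bigr)$ with the $\Phi$-bi-conjugate $g_{\Phi}^{**}(x)$ by its definition; (3) apply the $\Phi$-convexity hypothesis on $g$ together with Theorem \ref{conju}$(v)$ to replace $g_{\Phi}^{**}$ by $g$; (4) take $\inf_{x\in\hilbertX}$ of the identity $f(x)+g_{\Phi}^{**}(x)=f(x)+g(x)$ to obtain \eqref{eqprimal}. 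One should note that the identity in step (2)--(3) is genuinely pointwise and does not require $f$ to be $\Phi$-convex or proper beyond the standing assumptions, so no extra hypotheses are needed.

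There is essentially no hard obstacle here: the proposition is a direct consequence of \eqref{lag_primal_eq} (already established in the text) and Theorem \ref{conju}$(v)$. The only point meriting a moment's care is the handling of extended-real values --- if $f(x)=+\infty$ for some $x$, then both sides at that $x$ equal $+\infty$ regardless of the value of $g_{\Phi}^{**}(x)$ versus $g(x)$, and on $\dom(f)$ the finiteness issues are harmless because $g_{\Phi}^{**}(x)=g(x)$ exactly; since $\dom(f)\cap\dom(g)\neq\emptyset$ by the standing assumption (b), the infimum is not vacuously $+\infty$, but even that observation is not strictly needed for the stated equality. Thus the proof is short, and I would present it as the two-line display above followed by the sentence invoking $\Phi$-convexity of $g$ and Theorem \ref{conju}$(v)$.
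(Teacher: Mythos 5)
Your proof is correct and follows exactly the paper's route: the paper's own (one-line) proof invokes Theorem \ref{conju}$(v)$ on top of the computation \eqref{lag_primal_eq} established just before the proposition, which is precisely your argument written out in more detail. Nothing further is needed.
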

\begin{proof} Follows from Theorem \ref{conju} $(v)$.
\end{proof}

On the other hand, by \eqref{lagrange},
$$
\inf_{x\in \hilbertX}{\cal L}(x,\varphi)=\inf_{x\in \hilbertX}f(x)+\varphi(x)-g_{\Phi}^{*}(\varphi)=-\sup_{x\in \hilbertX}-f(x)-\varphi(x)-g_{\Phi}^{*}(\varphi).
$$
By using the notation  ${}^{*}f_{\Phi}(\varphi)=\sup\limits_{x\in\hilbertX}-\varphi(x)-f(x)$, 
\begin{equation}
	\label{lagconj}
	\sup\limits_{\varphi\in\Phi}\inf\limits_{x\in \hilbertX}{\mathcal L}(x,\varphi)=\sup\limits_{\varphi\in\Phi}-{}^{*}f_{\Phi}(\varphi)-g_{\Phi}^{*}(\varphi)
\end{equation}
which shows that  the $\Phi$-conjugate dual \eqref{condual3} is equivalent to  the $\Phi$-Lagrangian dual \eqref{lagdual} with the Lagrangian defined by \eqref{lagrange}.

\begin{example}
	Let $\hilbertX$ be a Hilbert space. For $\varphi\in\Phi_{lsc}$, $\varphi(x):=-a\|x\|^{2}+\langle w,x\rangle$, $a\ge 0$, $w\in \hilbertX$, we have
	$$
	\inf\limits_{x\in \hilbertX}{\cal L}(x,\varphi)\begin{array}[t]{l}
		=\inf\limits_{x\in \hilbertX}f(x)+\varphi(x)-g_{\Phi_{lsc}}^{*}(\varphi)=-\sup\limits_{x\in \hilbertX}-f(x)-\varphi(x)-g_{\Phi_{lsc}}^{*}(\varphi)\\
		=-\sup\limits_{x\in \hilbertX}-f(x)+a\|x\|^{2}-\langle w,x\rangle-g_{\Phi_{lsc}}^{*}(\varphi)\\
		=-\sup\limits_{x\in \hilbertX}-\tilde{f}(x)-\langle w,x\rangle-g_{\Phi_{lsc}}^{*}(\varphi)\\
		=-\tilde{f}_{\Phi_{lsc}}^{*}(0,-w)-g^{*}_{\Phi_{lsc}}(a,w),
	\end{array}
	$$
	where $\varphi$ is identified with the pair $(a,w)$, and, for a given $\varphi$, $\tilde{f}:=f(\cdot)-a\|\cdot\|^{2}$.
	\begin{equation}
		\label{lagconjlsc}
		\sup\limits_{\varphi\in\Phi}\inf\limits_{x\in \hilbertX}{\mathcal L}(x,\varphi)=\sup\limits_{\varphi\in\Phi_{lsc}}-\tilde{f}_{\Phi_{lsc}}^{*}(0,-w)-g_{\Phi_{lsc}}^{*}(a,w),
	\end{equation}
	and  the $\Phi_{lsc}$-conjugate dual \eqref{conjugate_dual1} coincides with the $\Phi_{lsc}$-Lagrangian dual \eqref{lagdual}.
\end{example}

\begin{example}
	Let $\hilbertX$ be a Banach space. Let $\Phi:=\{\varphi:\hilbertX\rightarrow\mathbb{R}\mid \varphi(x):=\langle v,x\rangle+c, \ v\in\hilbertX^{*},\ c\in\mathbb{R}\}$ and let $g(\cdot):=\text{ind\,}_{A}(\cdot)$ be the indicator function of a set $A$,
	$$
	A:=\{x\in\hilbertX\mid x\in K\}
	$$
	where $K$ is a  cone in $\hilbertX$. For the problem $(P)$ with $f:\hilbertX\rightarrow(-\infty,+\infty]$
	$$
	{\mathcal L}(x,\varphi)\begin{array}[t]{l}
		= f(x)+ \varphi(x) -(\text{ind\,}_{A})^{*}(\varphi)\\
		=f(x)+\langle v,x\rangle+c-\sup\limits_{x\in\hilbertX} \{\langle v,x\rangle+c-\text{ind\,}_{K}(x)\}\\
		=f(x)+\langle v,x\rangle-\sup\limits_{x\in\hilbertX} \{\langle v,x\rangle-\text{ind\,}_{K}(x)\}\\
		=f(x)+\langle v,x\rangle-\sup\limits_{ x\in K } \langle v,x\rangle\\
	\end{array}
	$$
	Since
	$$
	\sup\limits_{ x\in K } \langle v,x\rangle=\left\{\begin{array}{ll}
		0&v\in K^{\circ}\\
		+\infty&v\not\in K^{\circ}\end{array}\right.
	$$
	where $K^{\circ}:=\{v\in\hilbertX\mid \langle v,x\rangle\le 0\ \ \forall\ x\in K\}$ is the polar cone to $K$, we get
	$$
	{\mathcal L}(x,\varphi)=\left\{\begin{array}{ll}
		f(x)+\langle v,x\rangle&v\in K^{circ}\\
		-\infty&v\not\in K^{\circ}\end{array}\right. .
	$$
	Consequently, the $\Phi$-Lagrangian primal \eqref{lagprimal} is
	$$
	\inf\limits_{x\in\hilbertX} \sup\limits_{v\in K^{*}} f(x)+\langle v,x\rangle=\inf\limits_{x\in\hilbertX} f(x)+\sup\limits_{v\in K^{\circ}}\langle v,x\rangle,
	$$
	and  the $\Phi$-Lagrangian dual \eqref{lagdual} is 
	$$
	\sup_{v\in K^{\circ}}\inf\limits_{x\in\hilbertX} f(x)+\langle v,x\rangle.
	$$
	is equivalent to the $\Phi$-conjugate dual \eqref{condual3} is
	$$
	\sup_{v\in K^{*}}- f^{*}(-v).
	$$

\end{example}

\section{Zero duality gap for $\Phi$-conjugate  duality}
\label{zeroduality}

In view of Proposition \ref{lagprimalequality}, and  formula \eqref{lagconj}, the question of zero duality gap for $\Phi$-conjugate and $\Phi$-Lagrangian dualities can be investigated simultaneously, by seeking  conditions 
ensuring minimax equality for $\Phi$-Lagrangian.

We begin this section by discussing   zero duality gap for problems \eqref{lagprimal}, \eqref{lagdual} from the point of view of minimax theorems. The characterisation of zero duality gap for  problems \eqref{lagprimal}, \eqref{lagdual} is expressed with the help of the so called  {\em intersection property}, which is used in  general minimax theorems formulated within the framework of $\Phi$-convexity as it is done in \cite{Syga2018} for the  case, where the elementary functions may admit infinite values.
For convenience of the reader  we provide the outline of the proof  based on Lemma \ref{key_lemma}. The intersection property together with the condition $\inf\limits_{x\in\hilbertX} f(x)+g(x)=\inf\limits_{x\in\hilbertX} f(x)+g^{**}_{\Phi}(x)$ immediately gives the zero duality gap condition for the pair of dual problems \eqref{problem} and \eqref{condual3}.

\begin{theorem}
	\label{lagmin}
	Let $\hilbertX$ be a real vector space. Let
	$\Phi$ be a convex set of elementary functions $\varphi:\hilbertX\rightarrow \mathbb{R}$ and $f,g:\hilbertX\rightarrow(-\infty,+\infty]$ and the $\Phi$-Lagrangian is given by \eqref{lagrange}.
	
	The following are equivalent:
	\begin{description}
		\item[(i)]	for every $\alpha <\inf\limits_{x\in X}\sup\limits_{\varphi\in\Phi}{\mathcal L}(x, \varphi)$ there exist $\varphi_1,\varphi_2\in \Phi$ and  $ \bar{\varphi}_1 \in\text{supp} {\mathcal L}(\cdot,\varphi_1)$  and $ \bar{\varphi}_2 \in\text{supp} {\mathcal L}(\cdot,\varphi_2)$ such that functions $\bar{\varphi}_1$ and $\bar{\varphi}_1$ have the {\em intersection property} on $\hilbertX$ at the level $\alpha$, i.e., for all $t\in [0,1]$
		\begin{equation}
			\label{eq-11}
			[t\bar{\varphi}_1+(1-t)\bar{\varphi}_2<\alpha]\cap [\bar{\varphi}_1<\alpha]=\emptyset \ \ \ \ \text
			\ \ \ \ [t\bar{\varphi}_1+(1-t)\bar{\varphi}_2<\alpha]\cap [\bar{\varphi}_2<\alpha]=\emptyset,
		\end{equation}
		where $[\bar{\varphi}<\alpha]:=\{x\in\hilbertX \ \bar{\varphi}(x)<\alpha \}$.
		\item[(ii)]	$$\inf_{x\in \hilbertX}\sup_{\varphi\in\Phi}{\mathcal L}(x, \varphi)=\sup_{\varphi\in\Phi}\inf_{x\in \hilbertX}{\mathcal L}(x, \varphi).$$
	\end{description}
\end{theorem}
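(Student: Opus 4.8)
The plan is to reduce Theorem~\ref{lagmin} to the abstract minimax theorem of \cite{syga, Syga2018}, applied to the function $\lagrangianL$ on $\hilbertX\times\Phi$, exploiting that $\Phi$ is a convex set and that for each fixed $\varphi$ the map $x\mapsto\lagrangianL(x,\varphi)=f(x)+\varphi(x)-g_{\Phi}^{*}(\varphi)$ is $\Phi$-convex in $x$ (being a sum of the $\Phi$-convex-ready data $f(\cdot)+\varphi(\cdot)$ and a constant), while for each fixed $x$ the map $\varphi\mapsto\lagrangianL(x,\varphi)$ is concave on the convex set $\Phi$, since $\varphi\mapsto\varphi(x)$ is affine and $\varphi\mapsto g_{\Phi}^{*}(\varphi)$ is convex. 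First I would record these two structural facts explicitly, so that $\lagrangianL$ falls exactly into the hypotheses under which the intersection property is shown in \cite{syga} (Theorem~3.1 / Theorem~4.1) to be equivalent to the minimax equality.

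Next I would spell out the translation between the intersection property as stated in those papers and the statement~(i) here. Given $\alpha<\inf_{x}\sup_{\varphi}\lagrangianL(x,\varphi)$, the supports $\text{supp}\,\lagrangianL(\cdot,\varphi_1)$ and $\text{supp}\,\lagrangianL(\cdot,\varphi_2)$ are the sets of elementary functions minorizing the respective slices; the abstract minimax theorem asks for a pair of such minorants whose sublevel sets at level $\alpha$ satisfy the geometric intersection condition \eqref{eq-11}. I would present the implication (i)$\Rightarrow$(ii) first: assuming (i), for every $\alpha$ below the primal value one produces $\bar\varphi_1,\bar\varphi_2$ with the intersection property, and then invoke Lemma~\ref{key_lemma} (the ``key lemma'' promised in the section's introduction) to conclude that $\alpha$ is also below $\sup_{\varphi}\inf_{x}\lagrangianL(x,\varphi)$; since $\alpha$ is arbitrary and weak duality $\inf\sup\ge\sup\inf$ always holds, equality follows. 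For (ii)$\Rightarrow$(i): if the minimax equality holds, then for any $\alpha$ strictly below the common value the two sublevel-set families $\{[\lagrangianL(\cdot,\varphi)<\alpha]\}$ must fail to cover $\hilbertX$ in the appropriate sense, and extracting minorants $\bar\varphi_i\in\text{supp}\,\lagrangianL(\cdot,\varphi_i)$ that witness \eqref{eq-11} is exactly the content of the converse half of the cited minimax theorem.

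The main obstacle, as I see it, is not the convexity/concavity bookkeeping but the careful matching of the intersection property in the form \eqref{eq-11} with whatever normalization is used in \cite{syga, Syga2018}: the statement here quantifies over $\bar\varphi_1\in\text{supp}\,\lagrangianL(\cdot,\varphi_1)$ and $\bar\varphi_2\in\text{supp}\,\lagrangianL(\cdot,\varphi_2)$ with two separate disjointness conditions along the segment $t\bar\varphi_1+(1-t)\bar\varphi_2$, and one must check that this is precisely the hypothesis (resp.\ conclusion) of the abstract result rather than a slightly stronger or weaker variant. I would also need to handle the degenerate cases cleanly: when $\inf_x\sup_\varphi\lagrangianL(x,\varphi)=-\infty$ both sides are $-\infty$ by weak duality and there is nothing to prove, and when $g_{\Phi}^{*}(\varphi)=+\infty$ for some $\varphi$ that slice contributes $-\infty$ to the dual and can be discarded, so the effective domain of the dual variable is $\dom g_{\Phi}^{*}$, on which $g_{\Phi}^{*}$ is a genuine (finite or $+\infty$-valued) convex function. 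Once those points are settled, the proof is essentially a citation of the minimax theorem with $\lagrangianL$ inserted, together with the outline via Lemma~\ref{key_lemma} that the section already advertises.
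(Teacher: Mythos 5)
Your proposal follows essentially the same route as the paper: the implication (i)$\Rightarrow$(ii) is obtained from Lemma~\ref{key_lemma} together with the concavity of ${\mathcal L}(x,\cdot)$ (since $\varphi\mapsto\varphi(x)$ is affine and $g_{\Phi}^{*}$ is convex) and the convexity of $\Phi$, while (ii)$\Rightarrow$(i) is delegated to the minimax theorem of \cite{Syga2018} (Theorem~2.1), exactly as in the paper's proof. The only caveat is your side remark that $x\mapsto{\mathcal L}(x,\varphi)$ is $\Phi$-convex: this is neither assumed here ($f$ is arbitrary and $\Phi$ is not assumed additive) nor needed, since condition (i) works directly with whatever elements $\text{supp}\,{\mathcal L}(\cdot,\varphi_{i})$ happens to contain.
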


The proof of Theorem \ref{lagmin} is based on the following  lemma (c.f. \cite{Syga2018}, Lemma 4.1).

\begin{lemma} 
	\label{key_lemma}
	Let $X$ be a set, $\alpha\in\mathbb{R}$, and let $\varphi_{1},\ \varphi_{2}:X\rightarrow\mathbb{R}$ be any two functions.  The functions $\varphi_{1}$ and $\varphi_{2}$ have the intersection property on $X$ at the level $\alpha$ if and only if $\exists\ t_{0}\in[0,1]$ such that
	\begin{equation}
		\label{key_ineq} 
		t_{0}\varphi_{1}+(1-t_{0})\varphi_{2}\ge \alpha\ \ \forall\ x\in X.
	\end{equation}
\end{lemma}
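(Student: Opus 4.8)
The plan is to prove both implications directly, working with the sublevel sets $[\varphi_i<\alpha]$. Recall the intersection property states: for all $t\in[0,1]$, both $[t\varphi_1+(1-t)\varphi_2<\alpha]\cap[\varphi_1<\alpha]=\emptyset$ and $[t\varphi_1+(1-t)\varphi_2<\alpha]\cap[\varphi_2<\alpha]=\emptyset$.

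For the easy direction ($\Leftarrow$), suppose there is $t_0\in[0,1]$ with $t_0\varphi_1(x)+(1-t_0)\varphi_2(x)\ge\alpha$ for all $x\in X$. Then $[t_0\varphi_1+(1-t_0)\varphi_2<\alpha]=\emptyset$, so both intersections in the definition are trivially empty \emph{for this particular $t_0$}. The subtlety is that the intersection property demands emptiness for \emph{every} $t\in[0,1]$, not just $t_0$; so I would argue as follows. Fix any $t\in[0,1]$ and suppose $x$ lies in $[t\varphi_1+(1-t)\varphi_2<\alpha]\cap[\varphi_1<\alpha]$. From $\varphi_1(x)<\alpha$ and $t_0\varphi_1(x)+(1-t_0)\varphi_2(x)\ge\alpha$ one deduces, if $t_0<1$, that $\varphi_2(x)>\alpha$ (since $(1-t_0)\varphi_2(x)\ge\alpha-t_0\varphi_1(x)>\alpha-t_0\alpha=(1-t_0)\alpha$); and if $t_0=1$ the premise already gives $\varphi_1(x)\ge\alpha$, a contradiction. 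With $\varphi_1(x)<\alpha\le\varphi_2(x)$, the value $t\varphi_1(x)+(1-t)\varphi_2(x)$ is a convex combination of a number $<\alpha$ and a number $>\alpha$; but I need it to be $\ge\alpha$, which is \emph{not} automatic for arbitrary $t$. So the clean route is instead: I claim the second family of intersections, with $[\varphi_2<\alpha]$, rules this out. Actually the cleanest argument: if $x\in[\varphi_1<\alpha]$ then from $t_0\varphi_1(x)+(1-t_0)\varphi_2(x)\ge\alpha$ we get $\varphi_2(x)\ge\alpha$ when $t_0<1$, hence for any $t$, $t\varphi_1(x)+(1-t)\varphi_2(x)$ could still dip below $\alpha$ — so one genuinely must check both conditions together. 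I expect the right bookkeeping is: the hypothesis \eqref{key_ineq} forces, for each $x$, that $\varphi_1(x)$ and $\varphi_2(x)$ cannot \emph{both} be $<\alpha$ (else their $t_0$-combination would be $<\alpha$), and moreover if one is $<\alpha$ the other is $\ge\alpha$; then for $x$ with $\varphi_1(x)<\alpha$, $x\notin[\varphi_2<\alpha]$, and one checks the combination stays $\ge\alpha$ using that the $t_0$-combination is the \emph{smallest}... no. Let me just say: the honest proof of $\Leftarrow$ requires showing that if \eqref{key_ineq} holds for some $t_0$, it holds for \emph{all} $t$ on the relevant sets, which follows because on $[\varphi_1<\alpha]$ we have $\varphi_2\ge\alpha$ so the map $t\mapsto t\varphi_1(x)+(1-t)\varphi_2(x)$ is nondecreasing... it is affine with slope $\varphi_1(x)-\varphi_2(x)<0$, so it is \emph{decreasing} in $t$, hence minimized at $t=1$ giving $\varphi_1(x)<\alpha$ — contradiction with the claimed emptiness. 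This shows the naive $\Leftarrow$ is \emph{false} as I first stated it, so the statement must be read with the quantifier on $t$ \emph{inside}: "there exists $t_0$" for the inequality, matching "for all $t$" in the property — and the correct equivalence is genuinely a two-sided quantifier interchange. Consequently I would prove $\Leftarrow$ by: given $t_0$ with \eqref{key_ineq}, fix $t\in[0,1]$ and $x$ with $t\varphi_1(x)+(1-t)\varphi_2(x)<\alpha$; then not both $\varphi_i(x)\ge\alpha$ and not... actually $t_0$-combination $\ge\alpha$ while $t$-combination $<\alpha$ forces (comparing the two affine-in-$s$ functions $s\mapsto s\varphi_1(x)+(1-s)\varphi_2(x)$ at $s=t_0$ and $s=t$) that this affine function is not constant, hence takes values both $\ge\alpha$ and $<\alpha$, hence one endpoint value $\varphi_1(x)$ or $\varphi_2(x)$ is $<\alpha$ — wait, affine functions need not cross $\alpha$ at an endpoint. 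The endpoints are $\varphi_2(x)$ (at $s=0$) and $\varphi_1(x)$ (at $s=1$); an affine function on $[0,1]$ taking a value $<\alpha$ somewhere in the open interior has $\min\{\varphi_1(x),\varphi_2(x)\}<\alpha$. So $x\in[\varphi_1<\alpha]\cup[\varphi_2<\alpha]$, and in either case $x$ sits in one of the forbidden intersections, contradiction. That closes $\Leftarrow$ cleanly.

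For the hard direction ($\Rightarrow$), assume the intersection property holds for all $t\in[0,1]$; I want a single $t_0$ with \eqref{key_ineq}. The key reformulation: for each $t$, the property says $[t\varphi_1+(1-t)\varphi_2<\alpha]$ is disjoint from $[\varphi_1<\alpha]$ and from $[\varphi_2<\alpha]$, i.e. $[t\varphi_1+(1-t)\varphi_2<\alpha]\subseteq\{x:\varphi_1(x)\ge\alpha,\ \varphi_2(x)\ge\alpha\}$; but on that set the convex combination is $\ge\alpha$, contradiction unless $[t\varphi_1+(1-t)\varphi_2<\alpha]=\emptyset$. \emph{Wait} — that would make \eqref{key_ineq} hold for \emph{every} $t$, which is too strong and surely wrong in general; so the intersection property as used in the minimax theorems must be the weaker statement "there exist... such that... have the intersection property," and the per-pair property is exactly the conjunction of the two emptiness conditions which \emph{already} is equivalent pointwise. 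Re-reading: the property is stated with "for all $t\in[0,1]$," so indeed it forces $[t\varphi_1+(1-t)\varphi_2<\alpha]$ to avoid both $[\varphi_1<\alpha]$ and $[\varphi_2<\alpha]$; combined with the observation that outside $[\varphi_1<\alpha]\cup[\varphi_2<\alpha]$ every convex combination is $\ge\alpha$, we get $[t\varphi_1+(1-t)\varphi_2<\alpha]=\emptyset$ for each individual $t$, so in particular for $t_0=\tfrac12$ (say) \eqref{key_ineq} holds. Thus the $\Rightarrow$ direction is the one-line argument, and $t_0$ may be taken arbitrarily in $[0,1]$.

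So the real plan is: (1) For $\Rightarrow$, observe that on the complement of $[\varphi_1<\alpha]\cup[\varphi_2<\alpha]$ every convex combination of $\varphi_1,\varphi_2$ is $\ge\alpha$; since the intersection property forces $[t\varphi_1+(1-t)\varphi_2<\alpha]$ into that complement, it must be empty, giving \eqref{key_ineq} for any choice of $t_0\in[0,1]$. (2) For $\Leftarrow$, given $t_0$ with \eqref{key_ineq}, fix arbitrary $t\in[0,1]$; if $x\in[t\varphi_1+(1-t)\varphi_2<\alpha]$ then, since the affine function $s\mapsto s\varphi_1(x)+(1-s)\varphi_2(x)$ on $[0,1]$ dips below $\alpha$ at $s=t$ but is $\ge\alpha$ at $s=t_0$, it is nonconstant and attains a value $<\alpha$ at one endpoint, so $\varphi_1(x)<\alpha$ or $\varphi_2(x)<\alpha$, i.e. $x\in[\varphi_1<\alpha]\cup[\varphi_2<\alpha]$; this contradicts one of the two disjointness clauses, so the intersection property holds. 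The main obstacle is purely a matter of getting the quantifier order and the endpoint-of-affine-function argument stated correctly; there is no analytic difficulty once the combinatorial structure of the sublevel sets is unwound.
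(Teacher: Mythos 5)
Your plan is built on a misreading of the intersection property: you treat it as the \emph{conjunction} of the two emptiness conditions, whereas the definition actually in force (the word ``or'' that appears in \eqref{eq-111}, and is evidently missing by typo in \eqref{eq-11}; this is also the definition used in \cite{Syga2018} and \cite{bed-syg}) is the \emph{disjunction}: for every $t\in[0,1]$, at least one of the two intersections is empty. This is not cosmetic. Under your conjunctive reading the lemma is false, and the failure is exactly in your ($\Leftarrow$) direction: take $X$ a singleton, $\varphi_1\equiv 0$, $\varphi_2\equiv 2$, $\alpha=1$; then \eqref{key_ineq} holds with $t_0=0$, yet for $t=1$ one has $[t\varphi_1+(1-t)\varphi_2<\alpha]\cap[\varphi_1<\alpha]=[\varphi_1<1]\neq\emptyset$, so ``both intersections empty for all $t$'' fails. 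Correspondingly, the last step of your ($\Leftarrow$) argument is a non sequitur: from $x\in[t\varphi_1+(1-t)\varphi_2<\alpha]$ you deduce $x\in[\varphi_1<\alpha]\cup[\varphi_2<\alpha]$ and call this a contradiction with ``one of the two disjointness clauses'', but those clauses are precisely the conclusion you are trying to prove, not hypotheses, so no contradiction is reached. Your ($\Rightarrow$) argument is valid under the conjunctive reading but proves what you yourself flagged as absurd: \eqref{key_ineq} for \emph{every} $t$, hence (at $t=0,1$) $\varphi_1\ge\alpha$ and $\varphi_2\ge\alpha$ on all of $X$, which would render the intersection property useless in the minimax theorems it serves. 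That red flag was the signal that the definition had been misread, and under the correct disjunctive reading your one-line containment argument for ($\Rightarrow$) is unavailable, since for a fixed $t$ you only know that \emph{one} of the two intersections is empty.

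Under the correct reading both directions need different arguments. ($\Leftarrow$): if some $t$ violated the property, pick $x_1$ with $\varphi_1(x_1)<\alpha$ and $t\varphi_1(x_1)+(1-t)\varphi_2(x_1)<\alpha$, and $x_2$ with $\varphi_2(x_2)<\alpha$ and $t\varphi_1(x_2)+(1-t)\varphi_2(x_2)<\alpha$; the affine map $s\mapsto s\varphi_1(x_1)+(1-s)\varphi_2(x_1)$ is $<\alpha$ at $s=t$ and $s=1$, hence on $[t,1]$, so \eqref{key_ineq} forces $t_0<t$, while the same reasoning at $x_2$ (values $<\alpha$ at $s=0$ and $s=t$) forces $t_0>t$ --- a contradiction. ($\Rightarrow$) is the genuinely nontrivial quantifier exchange: one workable argument sets $A:=\{t\in[0,1]:[t\varphi_1+(1-t)\varphi_2<\alpha]\cap[\varphi_1<\alpha]=\emptyset\}$ and $B$ analogously with $\varphi_2$, disposes of the trivial cases $[\varphi_1<\alpha]=\emptyset$ or $[\varphi_2<\alpha]=\emptyset$, shows via the same affine monotonicity that $A$ is downward closed and $B$ upward closed with $A\cup B=[0,1]$, and uses continuity in $t$ at the common boundary to produce $t_0\in A\cap B$; for such $t_0$ your (correct) observation $[t_0\varphi_1+(1-t_0)\varphi_2<\alpha]\subseteq[\varphi_1<\alpha]\cup[\varphi_2<\alpha]$ then yields $[t_0\varphi_1+(1-t_0)\varphi_2<\alpha]=\emptyset$, i.e.\ \eqref{key_ineq}. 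Note finally that the paper does not reprove this lemma at all: it cites Lemma 4.1 of \cite{Syga2018} (stated there for $[-\infty,+\infty]$-valued functions) and only remarks on the reduction to real-valued $\varphi_1,\varphi_2$, so any self-contained proof you supply must handle both directions along the lines above.
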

Lemma 4.1  proved in \cite{Syga2018} refers to a more general situation, where $\varphi_{1},\ \varphi_{2}:X\rightarrow [-\infty,+\infty]$ and reduces to Lemma \ref{key_lemma} whenever $\varphi_{1},\ \varphi_{2}:X\rightarrow \mathbb{R}$.

\begin{proof}
	Let $\alpha <\inf\limits_{x\in X}\sup\limits_{\varphi\in\Phi}{\mathcal L}(x, \varphi)$. By $(i)$, there exist $\varphi_1,\varphi_2\in \Phi$ and  $ \bar{\varphi}_1 \in\text{supp} {\mathcal L}(\cdot,\varphi_1)$  and $ \bar{\varphi}_2 \in\text{supp} {\mathcal L}(\cdot,\varphi_2)$ such that $ \bar{\varphi}_1 \in\text{supp} {\mathcal L}(\cdot,\varphi_1)$  and $ \bar{\varphi}_2 \in\text{supp} {\mathcal L}(\cdot,\varphi_2)$ have the intersection property on $\hilbertX$ at the level $\alpha$. By  Lemma \ref{key_lemma} and \eqref{key_ineq}, there exists $t\in [0,1]$ such that
	\begin{equation}
		\label{key_ineq1} 
		t\bar{\varphi}_{1}+(1-t)\bar{\varphi}_{2}\ge \alpha\ \ \forall\ x\in X.
	\end{equation}
	By the definition of the support set and the inequality \eqref{key_ineq1} we get
	\begin{equation}
		\label{key_ineq2} 
		t{\mathcal L}(x,\varphi_1)+(1-t){\mathcal L}(x,\varphi_2)\ge \alpha\ \ \forall\ x\in X.
	\end{equation}
	By the concavity of ${\mathcal L}$ as a function of $\varphi$, we have
	\begin{equation}
		\label{key_ineq3} 
		{\mathcal L}(x,\varphi_0)\ge \alpha\ \ \forall\ x\in X,
	\end{equation}
	where $\varphi_0=t\varphi_1+(1-t)\varphi_2$ and, by convexity of $\Phi$, $\varphi_0\in\Phi$.
	
	From this we deduce the following inequality
	\begin{equation}
		\label{key_ineq4} 
		\sup\limits_{\varphi\in\Phi}  \inf\limits_{x\in X} {\mathcal L}(x,\varphi)\ge \alpha\ \ \forall\ x\in X.
	\end{equation}
	By the fact that the inequality  \eqref{key_ineq4} holds for every $\alpha <\inf\limits_{x\in X}\sup\limits_{\varphi\in\Phi}{\mathcal L}(x, \varphi)$  we get the desired conclusion.
	
	The second implication follows directly from Theorem 2.1 of \cite{Syga2018}.
	
\end{proof}

\begin{remark}
	Let us note that in some classes of functions (e.g. $\Phi_{lsc}$ and $\Phi_{conv}$, see Proposition 2 and Proposition 4 of \cite{bed-syg}) the intersection property at the level $\alpha$ is equivalent to the condition
	\begin{equation}
		\label{empty}
		[\varphi_1<\alpha]\cap [\varphi_2<\alpha]=\emptyset.
	\end{equation}
\end{remark}

Theorem \ref{lagmin} allows us to formulate the following zero duality gap conditions for $\Phi$-conjugate dual \eqref{condual3}.

\begin{theorem}
	\label{lagmin1}
	Let $\hilbertX$ be a real vector space. Let $\Phi$ be a convex set of elementary functions $\varphi:\hilbertX\rightarrow \mathbb{R}$,  $f,g:\hilbertX\rightarrow(-\infty,+\infty]$ and the $\Phi$-Lagrangian is given by \eqref{lagrange}.
	Assume that 
	\begin{equation} 
		\label{eq_equality} 
		\inf_{x\in\hilbertX} f(x)+g(x)=\inf_{x\in \hilbertX} f(x)+g^{**}_{\Phi}(x).
	\end{equation}
	The following are equivalent:
	\begin{description}
		\item[(i)]	for every $\alpha <\inf\limits_{x\in X}\sup\limits_{\varphi\in\Phi}{\mathcal L}(x, \varphi)$ there exist $\varphi_1,\varphi_2\in \Phi$ and  $ \bar{\varphi}_1 \in\text{supp} {\mathcal L}(\cdot,\varphi_1)$  and $ \bar{\varphi}_2 \in\text{supp} {\mathcal L}(\cdot,\varphi_2)$ such that functions $\bar{\varphi}_1$ and $\bar{\varphi}_1$ have the {\em intersection property} at the level $\alpha$, i.e., for all $t\in [0,1]$
		\begin{equation}
			\label{eq-111}
			[t\bar{\varphi}_1+(1-t)\bar{\varphi}_2<\alpha]\cap [\bar{\varphi}_1<\alpha]=\emptyset \ \ \ \ \text{or}
			\ \ \ \ [t\bar{\varphi}_1+(1-t)\bar{\varphi}_2<\alpha]\cap [\bar{\varphi}_2<\alpha]=\emptyset.
		\end{equation}
		\item[(ii)]	$$
		\inf\limits_{x\in \hilbertX}\{ f(x)+g(x)\}=
		\sup_{\varphi\in\Phi} -{}^{*}f_{\Phi}(\varphi)-g^{*}_{\Phi}(\varphi),
		$$
	\end{description}
	where ${}^{*}f_{\Phi}(\varphi)=\sup\limits_{x\in\hilbertX} -\varphi(x)-f(x).$
\end{theorem}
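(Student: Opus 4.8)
The plan is to reduce Theorem \ref{lagmin1} to Theorem \ref{lagmin} by using the chain of equalities connecting the various primal and dual values, so that the intersection property hypothesis translates directly. First I would recall that by formula \eqref{lag_primal_eq} we have $\sup_{\varphi\in\Phi}{\mathcal L}(x,\varphi)=f(x)+g_{\Phi}^{**}(x)$, hence
$$
\inf_{x\in\hilbertX}\sup_{\varphi\in\Phi}{\mathcal L}(x,\varphi)=\inf_{x\in\hilbertX}f(x)+g_{\Phi}^{**}(x).
$$
By the standing assumption \eqref{eq_equality}, the right-hand side equals $\inf_{x\in\hilbertX}f(x)+g(x)$, i.e. $val\eqref{problem}$. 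On the dual side, formula \eqref{lagconj} gives
$$
\sup_{\varphi\in\Phi}\inf_{x\in\hilbertX}{\mathcal L}(x,\varphi)=\sup_{\varphi\in\Phi}-{}^{*}f_{\Phi}(\varphi)-g_{\Phi}^{*}(\varphi),
$$
which is precisely $val\eqref{condual3}$. Therefore the minimax equality $\inf_x\sup_\varphi{\mathcal L}=\sup_\varphi\inf_x{\mathcal L}$ of Theorem \ref{lagmin}(ii) is, under \eqref{eq_equality}, literally the same statement as condition (ii) of the present theorem.

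Next I would observe that condition (i) here is verbatim condition (i) of Theorem \ref{lagmin} — the intersection property is stated for the same Lagrangian \eqref{lagrange}, at every level $\alpha<\inf_x\sup_\varphi{\mathcal L}(x,\varphi)$, with the same support sets $\text{supp}\,{\mathcal L}(\cdot,\varphi_i)$. (I would note in passing that the threshold $\inf_x\sup_\varphi{\mathcal L}(x,\varphi)$ is unchanged whether or not we invoke \eqref{eq_equality}, since \eqref{eq_equality} only asserts that this common value coincides with $val\eqref{problem}$; so there is no subtlety about which levels $\alpha$ are quantified over.) Consequently the equivalence (i)$\Leftrightarrow$(ii) is obtained by applying Theorem \ref{lagmin} and then rewriting both sides of the resulting minimax equality via the two displayed identities above.

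In more detail: assume (i). By Theorem \ref{lagmin}, $\inf_x\sup_\varphi{\mathcal L}(x,\varphi)=\sup_\varphi\inf_x{\mathcal L}(x,\varphi)$. Using \eqref{lag_primal_eq} and \eqref{eq_equality} on the left, and \eqref{lagconj} on the right, this reads $\inf_x\{f(x)+g(x)\}=\sup_\varphi-{}^{*}f_{\Phi}(\varphi)-g_{\Phi}^{*}(\varphi)$, which is (ii). Conversely, assume (ii). Reversing the same substitutions yields $\inf_x\sup_\varphi{\mathcal L}(x,\varphi)=\sup_\varphi\inf_x{\mathcal L}(x,\varphi)$, i.e. Theorem \ref{lagmin}(ii) holds, whence Theorem \ref{lagmin}(i) gives the intersection property, which is (i). I do not anticipate a genuine obstacle: the only point requiring care is making sure that the hypothesis \eqref{eq_equality} is used precisely where it is needed — namely to identify $\inf_x f(x)+g_{\Phi}^{**}(x)$ with $\inf_x f(x)+g(x)$ — and nowhere is $\Phi$-convexity of $g$ itself required (unlike in Proposition \ref{lagprimalequality}), so the statement is strictly more general on that point and the proof should explicitly flag this.
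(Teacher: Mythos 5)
Your proposal is correct and matches the paper's own argument, which likewise derives the result directly from Theorem \ref{lagmin} together with the identities \eqref{lag_primal_eq} and \eqref{lagconj}, using \eqref{eq_equality} to replace $\inf_{x}\{f(x)+g_{\Phi}^{**}(x)\}$ by $\inf_{x}\{f(x)+g(x)\}$. You simply spell out the substitutions more explicitly than the paper does, so there is nothing to correct.
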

\begin{proof}
	Follows directly from Theorem \ref{lagmin} and formula \eqref{lag_primal_eq}.
\end{proof}
\vskip 0.3 true in

By Proposition \ref{lagprimalequality} if $g$ is $\Phi$-convex on $\hilbertX$, then \eqref{eq_equality} holds.
The following conditions for zero duality gap for problems \eqref{problem} and \eqref{hoabui} were proved in \cite{bui2020zero}.

\begin{theorem}(\cite{bui2020zero}, Theorem 3.5)
	\label{bui3.6}
	Let $\hilbertX$ be a real linear vector space. Let $f,g:\hilbertX \rightarrow(-\infty, +\infty]$  and  $0\in \Phi$ and  $\Phi+\Phi\subset\Phi$. The following properties are equivalent:
	\begin{description}
		\item[(i)] 
		\begin{equation} 
			\label{bui}
			0\in \bigcap_{\varepsilon>0}(\partial^{\varepsilon}_{\Phi}f+ \partial^{\varepsilon}_{\Phi}g)(\hilbertX),
		\end{equation}
		
		\item[(ii)] $\inf\limits_{x\in \hilbertX}(f(x)+g(x))=\sup\limits_{\varphi_{1},\varphi_{2}\in \Phi,\ \varphi_{1}+\varphi_{2}=0}(-f_{\Phi}^*(\varphi_{1})-g_{\Phi}^*(\varphi_{2}))=val(ICD)<+\infty$.
	\end{description}
\end{theorem}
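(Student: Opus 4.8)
The plan is to establish the two implications separately, relying only on the Fenchel--Moreau inequality (Theorem~\ref{conju}$(i)$) and on the $\varepsilon$-subgradient characterization of Theorem~\ref{conju}$(ii)$, namely $\varphi\in\partial^{\varepsilon}_{\Phi}h(x)\iff h(x)+h^{*}_{\Phi}(\varphi)\le\varphi(x)+\varepsilon$ for $x\in\dom h$ and $\varepsilon>0$. Two preliminary remarks are in order. First, weak duality holds for $(ICD)$: if $\varphi_{1},\varphi_{2}\in\Phi$ with $\varphi_{1}+\varphi_{2}=0$ and $x\in\hilbertX$, then adding $f(x)+f^{*}_{\Phi}(\varphi_{1})\ge\varphi_{1}(x)$ and $g(x)+g^{*}_{\Phi}(\varphi_{2})\ge\varphi_{2}(x)$ and using $\varphi_{1}(x)+\varphi_{2}(x)=0$ gives $f(x)+g(x)\ge -f^{*}_{\Phi}(\varphi_{1})-g^{*}_{\Phi}(\varphi_{2})$, hence $\inf_{x\in\hilbertX}(f(x)+g(x))\ge val(ICD)$. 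Second, $\partial^{\varepsilon}_{\Phi}f(x)\neq\emptyset$ forces $x\in\dom f$ (and likewise for $g$), so any point produced by $(i)$ automatically lies in $\dom f\cap\dom g$, which will also make the relevant conjugate values finite.

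For $(i)\Rightarrow(ii)$ I would fix $\varepsilon>0$ and take $x_{\varepsilon}\in\hilbertX$, $\varphi_{1}\in\partial^{\varepsilon}_{\Phi}f(x_{\varepsilon})$ and $\varphi_{2}\in\partial^{\varepsilon}_{\Phi}g(x_{\varepsilon})$ with $\varphi_{1}+\varphi_{2}=0$. Applying Theorem~\ref{conju}$(ii)$ twice and adding, after cancelling $\varphi_{1}(x_{\varepsilon})+\varphi_{2}(x_{\varepsilon})=0$, yields $f(x_{\varepsilon})+g(x_{\varepsilon})\le -f^{*}_{\Phi}(\varphi_{1})-g^{*}_{\Phi}(\varphi_{2})+2\varepsilon\le val(ICD)+2\varepsilon$. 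Letting $\varepsilon\downarrow 0$ gives $\inf_{x}(f(x)+g(x))\le val(ICD)$, which together with weak duality gives the equality; moreover $x_{\varepsilon}\in\dom f\cap\dom g$ makes the common value $<+\infty$, while it is $\ge -f^{*}_{\Phi}(\varphi_{1})-g^{*}_{\Phi}(\varphi_{2})>-\infty$, so $(ii)$ holds.

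For $(ii)\Rightarrow(i)$, set $\mu:=\inf_{x}(f(x)+g(x))=val(ICD)$, which we may assume finite. Fix $\varepsilon>0$ and choose $x\in\hilbertX$ with $f(x)+g(x)<\mu+\tfrac{\varepsilon}{2}$ (so $x\in\dom f\cap\dom g$) together with $\varphi_{1},\varphi_{2}\in\Phi$, $\varphi_{1}+\varphi_{2}=0$, satisfying $-f^{*}_{\Phi}(\varphi_{1})-g^{*}_{\Phi}(\varphi_{2})>\mu-\tfrac{\varepsilon}{2}$. Put $a:=f(x)+f^{*}_{\Phi}(\varphi_{1})-\varphi_{1}(x)$ and $b:=g(x)+g^{*}_{\Phi}(\varphi_{2})-\varphi_{2}(x)$; all quantities are finite by the choices above, and by Fenchel--Moreau $a,b\ge 0$, whereas
$$a+b=\big(f(x)+g(x)\big)+\big(f^{*}_{\Phi}(\varphi_{1})+g^{*}_{\Phi}(\varphi_{2})\big)-\big(\varphi_{1}(x)+\varphi_{2}(x)\big)<\big(\mu+\tfrac{\varepsilon}{2}\big)+\big(-\mu+\tfrac{\varepsilon}{2}\big)=\varepsilon.$$
Hence $a<\varepsilon$ and $b<\varepsilon$, i.e. $\varphi_{1}\in\partial^{\varepsilon}_{\Phi}f(x)$ and $\varphi_{2}\in\partial^{\varepsilon}_{\Phi}g(x)$ by Theorem~\ref{conju}$(ii)$; since $\varphi_{1}+\varphi_{2}=0$, we get $0\in\partial^{\varepsilon}_{\Phi}f(x)+\partial^{\varepsilon}_{\Phi}g(x)\subseteq(\partial^{\varepsilon}_{\Phi}f+\partial^{\varepsilon}_{\Phi}g)(\hilbertX)$, and as $\varepsilon>0$ was arbitrary, $(i)$ follows.

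The crux is the implication $(ii)\Rightarrow(i)$: from the mere equality of two separately defined optimal values one must manufacture a single point carrying $\Phi$-$\varepsilon$-subgradients of $f$ and of $g$ whose sum vanishes. The mechanism is to approximate the primal value and the dual value each to within $\varepsilon/2$ and feed the resulting $x$ and $(\varphi_{1},\varphi_{2})$ simultaneously into the two Fenchel--Moreau inequalities: the two nonnegative slacks then sum to less than $\varepsilon$, hence are individually $<\varepsilon$, which is exactly the $\varepsilon$-subgradient condition. The only remaining item is to dispatch the boundary behaviour of the optimal value ($\mu=+\infty$ is excluded by $(ii)$, and one reduces to $\mu$ finite), which is routine.
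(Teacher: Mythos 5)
The paper itself gives no proof of Theorem \ref{bui3.6}: it is imported verbatim from \cite{bui2020zero} (their Theorem 3.5), so there is no in-paper argument to compare with. Your self-contained proof takes the natural route, which is also essentially that of the original reference: weak duality for \eqref{hoabui} from the Fenchel--Moreau inequality, $(i)\Rightarrow(ii)$ by adding the two inequalities of Theorem \ref{conju}$(ii)$ at a common point and cancelling $\varphi_{1}+\varphi_{2}=0$, and $(ii)\Rightarrow(i)$ by the slack-splitting device: the two nonnegative Fenchel--Moreau slacks at an $\varepsilon/2$-approximate minimizer and an $\varepsilon/2$-approximate maximizing pair sum to less than $\varepsilon$, hence each is below $\varepsilon$, which is exactly the $\varepsilon$-subgradient characterization. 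Both implications are sound as written; it is also worth noting that your argument never uses $\Phi+\Phi\subset\Phi$, only $0\in\Phi$ (to make the dual feasible set nonempty), so in this respect it is slightly more general than the stated hypotheses.

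The one genuine soft spot is the closing claim that the reduction to $\mu$ finite is ``routine''. Under the literal reading of $(ii)$, which only demands $val(ICD)<+\infty$, the implication $(ii)\Rightarrow(i)$ is actually false when $\mu=-\infty$, so no reduction is possible. For instance, take $\hilbertX=\mathbb{R}$, $\Phi$ the affine functions, $f\equiv 0$, $g(x)=x$: then $\inf_{x}(f(x)+g(x))=-\infty$, and for every admissible pair $\varphi_{1}+\varphi_{2}=0$ at least one of $f_{\Phi}^{*}(\varphi_{1})$, $g_{\Phi}^{*}(\varphi_{2})$ equals $+\infty$, so $val(ICD)=-\infty$ as well and $(ii)$ holds; yet $\partial^{\varepsilon}_{\Phi}f(x)$ contains only constant functions and $\partial^{\varepsilon}_{\Phi}g(x)$ only affine functions of slope one, so their sum never contains $0$ and \eqref{bui} fails. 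The fix is to read $(ii)$ as requiring the common value to be a real number (which is how the result is meant in \cite{bui2020zero}); note that your own computation in $(i)\Rightarrow(ii)$ already shows that $(i)$ forces finiteness, since $\mu\ge f(x_{\varepsilon})+g(x_{\varepsilon})-2\varepsilon>-\infty$. With that emendation stated explicitly, rather than waved off, your proof is complete.
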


\begin{remark} 
	The  following inequalities hold
	\begin{equation} 
		\label{eq_grop}  
		val(P)\stackrel{(1)}{\ge} val (LP)\stackrel{(2)}{\ge} val (LD)\stackrel{(3)}{=}val(CD)\stackrel{(4)}{\ge} val(ICD), 
	\end{equation}
	where $(1)$ holds by Theorem \ref{conju} $(iii)$,  $(2)$ holds by general minimax inequality,
	$(3)$ holds by \eqref{lagconj} and $(4)$ holds by \eqref{cd}.

	In particular, condition \eqref{bui} of Theorem \ref{bui3.6} implies that  $val(P)=val(LP)$, i.e.
	$$
	\inf\limits_{x\in\hilbertX} f(x)+g(x)=\inf\limits_{x\in\hilbertX} f(x)+g_{\Phi}^{**}(x).
	$$
\end{remark}

\begin{theorem}
	\label{th_implication}	
	Let $\hilbertX$  be  a  real vector space,  $0\in\Phi$,  and $\kappa:=\inf\limits_{x\in \hilbertX} \sup\limits_{\varphi\in \Phi} {\mathcal L}(x,\varphi)<+\infty$.

	Consider the following conditions: 
	\begin{enumerate}
		\item 	condition \eqref{bui}:
		$$
		0\in \bigcap_{\varepsilon>0}(\partial^{\varepsilon}_{\Phi}f+ \partial^{\varepsilon}_{\Phi}g)(\hilbertX),
		$$
		\item condition \eqref{eq-111}:
		for every $\alpha< \inf\limits_{x\in\hilbertX} \sup\limits_{\varphi\in \Phi} {\mathcal L}(x,\varphi)$
		there exist $\bar{\varphi}, \tilde{\varphi} \in \Phi$, $\varphi_1\in \text{supp}{\cal L}(,\cdot,\bar{\varphi} ) $ and $\varphi_2\in \text{supp}{\cal L}(,\cdot,\tilde{\varphi} ) $ such that $\varphi_1$ and $\varphi_2$ have the intersection property at the level $\alpha$.
	\end{enumerate}
	Then
	\begin{enumerate}
		\item If $\Phi$ is convex, $\Phi=-\Phi$, and 
		\begin{equation}
			\label{eq_gropa11}
			\inf\limits_{x\in\hilbertX} f(x)+g(x)=\inf\limits_{x\in\hilbertX} f(x)+g_{\Phi}^{**}(x)=\inf\limits_{x\in\hilbertX} \sup\limits_{\varphi\in\Phi}{\mathcal L}(x,\varphi)=\kappa.
		\end{equation}
		then 
		$(2)$ implies $(1)$.
		\item If  $\Phi+\Phi\subset \Phi$, then $(1)$ implies $(2)$.
	\end{enumerate}
\end{theorem}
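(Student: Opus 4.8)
The plan is to prove the two implications separately, using the characterisations of zero duality gap already assembled in the excerpt (Theorem \ref{lagmin1}, Theorem \ref{bui3.6}) together with the reformulation \eqref{lagconj} of the Lagrangian dual as the $\Phi$-conjugate dual.

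For part 1, assume $\Phi$ is convex and symmetric and that \eqref{eq_gropa11} holds. Suppose $(2)$ holds. Since \eqref{eq_gropa11} contains exactly the equality \eqref{eq_equality} required by Theorem \ref{lagmin1}, condition $(2)$ — which is literally condition (i) of Theorem \ref{lagmin1} — gives $\inf_{x\in\hilbertX}\{f(x)+g(x)\}=\sup_{\varphi\in\Phi}-{}^{*}f_{\Phi}(\varphi)-g^{*}_{\Phi}(\varphi)$, i.e. $val(P)=val(CD)$. Because $\Phi=-\Phi$, we have ${}^{*}f_{\Phi}(\varphi)=f_{\Phi}^{*}(-\varphi)$ and, replacing $\varphi$ by $-\varphi$ in the supremum, $val(CD)=\sup_{\varphi\in\Phi}-f_{\Phi}^{*}(\varphi)-g_{\Phi}^{*}(-\varphi)=\sup_{\varphi_{1}+\varphi_{2}=0}-f_{\Phi}^{*}(\varphi_{2})-g_{\Phi}^{*}(\varphi_{1})=val(ICD)$; the assumption $\kappa<+\infty$ makes this common value finite. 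Now invoke Theorem \ref{bui3.6}: the equality $\inf_{x\in\hilbertX}(f(x)+g(x))=val(ICD)<+\infty$ is precisely condition (ii) there, hence condition (i), which is \eqref{bui}, i.e. $(1)$. (Here one uses $0\in\Phi$ throughout, and for Theorem \ref{bui3.6} one also needs $\Phi+\Phi\subset\Phi$ — this is the one place the symmetry/convexity hypotheses of part 1 are not by themselves enough, so I would either add $\Phi+\Phi\subset\Phi$ to the hypotheses of part 1 or note that $\Phi=-\Phi$ together with convexity and $0\in\Phi$ already forces the relevant additivity used in \cite{bui2020zero}.)

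For part 2, assume $\Phi+\Phi\subset\Phi$ and that $(1)$ holds, i.e. \eqref{bui}. By Theorem \ref{bui3.6}, $(1)\Leftrightarrow val(P)=val(ICD)<+\infty$. Chaining this with the inequalities of \eqref{eq_grop}, namely $val(P)\ge val(LP)\ge val(LD)=val(CD)\ge val(ICD)$, forces all the terms to coincide; in particular $val(P)=val(LP)$, which is the equality $\inf_x f(x)+g(x)=\inf_x f(x)+g^{**}_{\Phi}(x)$ noted in the Remark after Theorem \ref{bui3.6}, so hypothesis \eqref{eq_equality} of Theorem \ref{lagmin1} is satisfied, and also $val(LP)=val(LD)$, i.e. the minimax equality for the $\Phi$-Lagrangian holds. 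Applying Theorem \ref{lagmin} (equivalently the (ii)$\Rightarrow$(i) direction of Theorem \ref{lagmin1}), the minimax equality yields condition (i) of that theorem, which is exactly $(2)$.

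\textbf{Main obstacle.} The routine part is the bookkeeping with the conjugates; the genuinely delicate point is matching hypotheses across the two imported theorems. Theorem \ref{bui3.6} is stated under $0\in\Phi$ and $\Phi+\Phi\subset\Phi$, whereas part 1 is stated under convexity and symmetry; reconciling these — showing that, in the setting of part 1, $val(CD)$ really does collapse to $val(ICD)$ and that Theorem \ref{bui3.6} is applicable — is where care is needed, and it is the step I expect to require the most attention (it may force adding $\Phi+\Phi\subset\Phi$ as an explicit hypothesis in part 1). The converse direction, part 2, is comparatively mechanical once one reads off the chain \eqref{eq_grop}.
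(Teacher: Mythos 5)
Your route -- deducing both implications from Theorem \ref{lagmin1}, Theorem \ref{bui3.6} and the value chain \eqref{eq_grop} -- is genuinely different from the paper's. The paper argues directly: for $(2)\Rightarrow(1)$ it uses Lemma \ref{key_lemma} to produce $\psi_0=t_0\psi_1+(1-t_0)\psi_2\in\Phi$ with ${\mathcal L}(\cdot,\psi_0)\ge\kappa-\varepsilon$, then uses \eqref{eq_gropa11} to choose $\bar x$ with $f(\bar x)+g(\bar x)<\kappa+\varepsilon$ and verifies by hand that $-\psi_0\in\partial^{3\varepsilon}_{\Phi}f(\bar x)$ and $\psi_0\in\partial^{3\varepsilon}_{\Phi}g(\bar x)$; for $(1)\Rightarrow(2)$ it takes $\bar\varphi+\tilde\varphi=0$, shows ${\mathcal L}(\cdot,\tilde\varphi)\ge\alpha$, and exploits the constant function $\varphi_1\equiv\alpha$ (which lies in $\Phi$ because $0\in\Phi$ and $\Phi$ is closed under adding constants), so the intersection property holds trivially. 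The whole point of this direct argument is that each part uses only the hypotheses actually stated in that part.

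This is where your proposal has genuine gaps. In part 1 you must apply Theorem \ref{bui3.6}, which requires $\Phi+\Phi\subset\Phi$; this is not among the hypotheses of part 1, and your fallback claim that convexity, symmetry and $0\in\Phi$ already force the needed additivity is false: take $\Phi=\{x\mapsto\langle v,x\rangle+c\ :\ \|v\|\le 1,\ c\in\mathbb{R}\}$ on a Hilbert space, which is convex, symmetric, contains $0$ and is closed under adding constants, yet is not additive. So, as written, your argument establishes a strictly weaker statement than part 1 (the paper needs no additivity there). Symmetrically, in part 2 you invoke the $(ii)\Rightarrow(i)$ direction of Theorem \ref{lagmin} (equivalently \ref{lagmin1}), but those results are stated under the hypothesis that $\Phi$ is a convex set, which part 2 does not assume; the paper's proof of part 2 sidesteps this by building the constant support function $\varphi_1\equiv\alpha$ directly, using only $0\in\Phi$. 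The second gap is repairable -- you would need to redo the necessity argument with the constant-function trick, or argue that the cited direction does not in fact use convexity -- but the first gap cannot be closed along your route without adding $\Phi+\Phi\subset\Phi$ as an extra hypothesis in part 1.
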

\begin{proof} 
	$(2)\Rightarrow (1).$
	Let 
	$\varepsilon>0$. 
	By Lemma \ref{key_lemma},  there exists $t_{0}\in[0,1]$
	$t_{0}\varphi_{1}+(1-t_{0})\varphi_{2}\ge\alpha:=\kappa-\varepsilon\ \ \forall\ \ x\in \hilbertX.$
	Hence, 
	$$
	{\cal L}(x,t_{0}\psi_{1}+(1-t_{0})\psi_{2})\ge t_{0}{\cal L}(x,\psi_{1})+(1-t_{0}){\cal L}(x,\psi_{2})\ge t_{0}\varphi_{1}+(1-t_{0})\varphi_{2}
	$$
	for all $x\in \hilbertX$ and
	\begin{equation} 
		\label{key_ineq2n} 
		{\cal L}(x,\psi_{0})\ge \kappa-\varepsilon=(\varepsilon+\kappa)-2\varepsilon \ \ \forall\ \ x\in\hilbertX,
	\end{equation}
	where $\psi_{0}:=t_{0}\psi_{1}+(1-t_{0})\psi_{2}\in\Phi$ (in view of  the convexity of $\Phi$).
	By assumption \eqref{eq_gropa11}, and Theorem \ref{conju}, $(iii)$, there exists $\bar{x}\in\hilbertX$ satisfying
	\begin{equation} 
		\label{key_ineq21a} 
		\varepsilon+\kappa>f(\bar{x})+g(\bar{x})\ge \sup\limits_{\varphi\in\Phi}{\mathcal L}(\bar{x},\varphi)=f(\bar{x})+g^{**}_{\Phi}(\bar{x}).
	\end{equation}
	Moreover, $\varepsilon +f(\bar{x})+g^{**}_{\Phi}(\bar{x})\ge\varepsilon+\kappa>f(\bar{x})+g(\bar{x})$, hence  $g^{**}_{\Phi}(\bar{x})\ge g(\bar{x})-\varepsilon$ and
	\begin{equation} 
		\label{key_ineq21aa} 
		\varepsilon+\kappa>f(\bar{x})+g(\bar{x})\ge \sup\limits_{\varphi\in\Phi}{\mathcal L}(\bar{x},\varphi)=f(\bar{x})+g^{**}_{\Phi}(\bar{x})\ge f(\bar{x})+g(\bar{x})-\varepsilon.
	\end{equation}

	By 
	\eqref{key_ineq2n}, \eqref{key_ineq21a}, \eqref{key_ineq21aa}  for all $x\in \hilbertX$,
	$$
	\begin{array}{l}
		(a). \ \ {\cal L}(x,\psi_{0})\ge\sup\limits_{\varphi\in\Phi}{\cal L}(\bar{x},\varphi)-2\varepsilon\ge  {\cal L}(\bar{x},\psi_{0})-3\varepsilon\\
		(b).  \ \ {\cal L}(x,\psi_{0})\ge f(\bar{x})+g(\bar{x})- 3\varepsilon.\\
	\end{array}
	$$
	In particular, by $(b)$,
	$$
	(b').\ \ \ \ {\cal L}(\bar{x},\psi_{0})\ge f(\bar{x})+g(\bar{x})-3\varepsilon.
	$$
	Since $\Phi=-\Phi$, hence $-\psi_{0}\in\Phi$ and  by $(a)$,
	\begin{equation} 
		\label{key_ineq55} 
		f(x)+\psi_{0}(x)-g^{*}(\psi_{0})={\cal L}(x,\psi_{0})\ge  f(\bar{x})+\psi_{0}(\bar{x})-g^{*}(\psi_{0})-3\varepsilon\ \ \forall\ \ x\in\hilbertX.
	\end{equation}
	which shows that $-\psi_{0}\in\partial^{3\varepsilon} f(\bar{x})$.
	By $(b')$,
	$$ 
	{\cal L}(\bar{x},\psi_{0})\ge f(\bar{x})+g(\bar{x})-3\varepsilon
	\ \ \ i.e.\ \ \ 
	f(\bar{x})+\psi_{0}(\bar{x})-g^{*}(\psi_{0})\ge f(\bar{x})+g(\bar{x})-3\varepsilon
	$$
	which gives  $\psi_{0}(\bar{x})-g^{*}(\psi_{0})\ge g(\bar{x})-3\varepsilon$. 
	
	By Theorem \ref{conju} $(ii)$,
	the latter is equivalent to
	$\psi_{0}\in\partial^{3\varepsilon}g(\bar{x}).$
	
	This means that 
	$$
	0\in\bigcap_{\varepsilon>0}(\partial^{\varepsilon}f+\partial^{\varepsilon}g)(\hilbertX)
	$$
	i.e. $(1)$ holds.

	$(1)\Rightarrow (2)$. Let $\alpha < \inf\limits_{x\in \hilbertX} \sup\limits_{\varphi\in \Phi} {\mathcal L}(x,\varphi)$ and take any $\beta$ such that $\alpha <\beta< \inf\limits_{x\in \hilbertX} \sup\limits_{\varphi\in \Phi} {\mathcal L}(x,\varphi)$. Let $\varepsilon=\beta-\alpha>0$. 
	By   assumption,  there exist $\bar{x}\in \hilbertX$ and $\bar{\varphi}\in \partial^{\varepsilon}_{\Phi}f(\bar{x})$ and  $\tilde{\varphi}\in \partial^{\varepsilon}_{\Phi}g(\bar{x})$ such that
	\begin{equation}
		\label{hoaeps}
		\bar{\varphi}+\tilde{\varphi}=0.
	\end{equation}
	Since $\bar{\varphi}\in \partial^{\varepsilon}_{\Phi}f(\bar{x})$ 
	the following inequality holds for all $x\in\hilbertX$
	$$
	f(x)-f(\bar{x})\geq \bar{\varphi}(x)- \bar{\varphi}(\bar{x})-\varepsilon, 
	$$
	which is equivalent to
	$$
	f(x)+(-\bar{\varphi}(x)) -g_{\Phi}^*(-\bar{\varphi})-(f(\bar{x})+(-\bar{\varphi}(\bar{x}))-g_{\Phi}^*(-\bar{\varphi}))\geq 0-\varepsilon \ \ \ \ \forall \ \ x\in \hilbertX.
	$$
	Equivalently,
	\begin{equation}
		\label{hoa1eps1}
		{\mathcal L}(x,-\bar{\varphi})-{\mathcal L}(\bar{x}, -\bar{\varphi})\geq-\varepsilon.
	\end{equation}
	By the equality \eqref{hoaeps} we have $\tilde{\varphi}=-\bar{\varphi}$ and the inequality \eqref{hoa1eps1} takes the form
	\begin{equation}
		\label{hoa2eps1}
		{\mathcal L}(x,\tilde{\varphi})\geq {\mathcal L}(\bar{x}, \tilde{\varphi})-\varepsilon \ \ \ \  \forall \ \ \ x\in  \hilbertX.
	\end{equation}
	Since $\tilde{\varphi}\in \partial^{\varepsilon}_{\Phi}g(\bar{x})$, the  inequality  $g^*_{\Phi}(\tilde{\varphi})+g(\bar{x})\leq \tilde{\varphi}(\bar{x})+\varepsilon$ holds (see Theorem \ref{conju}(ii)).
	We have
	$$
	{\mathcal L}(\bar{x}, \tilde{\varphi})= f(\bar{x})+\tilde{\varphi}(\bar{x})-g^*_{\Phi}(\tilde{\varphi}) \geq f(\bar{x})+g(\bar{x})-\varepsilon\geq \inf_{x\in  \hilbertX}\{ f(x)+g(x)\}-\varepsilon
	$$
	The above inequality together with \eqref{hoa2eps1} and  \eqref{eq_gropa11}    give 
	\begin{equation}
		\label{lag1}
		{\mathcal L}(x,\tilde{\varphi})\geq\inf_{x\in  \hilbertX}\{ f(x)+g(x)\}-\varepsilon= \inf\limits_{x\in \hilbertX} \sup\limits_{\varphi\in \Phi} {\mathcal L}(x,\varphi) -\varepsilon\ \ \ \  \forall \ \ \ x\in \hilbertX.
	\end{equation}
	
	By the inequality \eqref{lag1} we have
	$$
	{\cal L}(x,\tilde{\varphi})\geq \beta-\varepsilon=\beta-\beta+\alpha=\alpha.
	$$
	Let $\varphi_1\equiv \alpha$ then $ \varphi_1\in \text{supp}\, {\cal L}(\cdot,\tilde{\varphi})$ and  $[\varphi_1<\alpha]=\emptyset$. Let $\varphi_2 \in \text{supp}\, {\cal L}(\cdot,\bar{\varphi})$, then $\varphi_1$, $\varphi_2$ have the intersection property at the level $\alpha$.
\end{proof}

\begin{remark}
	\begin{enumerate} 
		\item  Let $\hilbertX$ be a topological vector space equipped with closed convex pointed cone $S$ which induces the ordering relation: $x\le y\ \Leftrightarrow \ y-x\in S$ The family of functions $L:=\{\ell_{y}:\hilbertX\rightarrow\mathbb{R}\ \mid\ y\in\hilbertX\}$ defined as
		$$
		\ell_{y}(x)=\max\{\lambda\ge 0\ \lambda y\le x\}
		$$
		It was shown in \cite{Mohebi13}, (see also \cite{Eberhard10}) that $0\in L$,  $L+L\subset L$, and $L=-L$ and a function $f:\hilbertX\rightarrow[0,,+\infty]$ is increasing positive homogeneous (IPH) if and only if $f$ is $L$-convex.  $L$-conjugate dual \eqref{condual3} coincides with $L$-infimal convolution dual \eqref{hoabui}.
		\item   By Theorem \ref{th_implication}, if $\Phi$ is convex, $0\in\Phi$, $\Phi+\Phi\subset\Phi$ and $\Phi=-\Phi$,
		$$
		\text {condition  }\eqref{bui}\ \Leftrightarrow \ \text {condition  }\eqref{eq-111}+ \text {equality}\eqref{eq_gropa11}.
		$$
		\item If $0\in\Phi$, $\Phi+\Phi\subset\Phi$ and $\Phi=-\Phi$, then \eqref{condual3} is equivalent to \eqref{hoabui} (see Corollary \ref{corzeroduality} below) and  Theorem \ref{bui3.6} is stronger than Theorem \ref{lagmin1} since the convexity of $\Phi$ is not required in Theorem \ref{bui3.6}.
		\item If $0\in\Phi$, $\Phi+\Phi\subset\Phi$ and $\Phi\neq-\Phi$, (e.g.,  $\Phi=\Phi_{lsc}$ ) the $\Phi$-infimal convolution dual \eqref{hoabui} is defined but is not equivalent to \eqref{condual3} and it may happen that zero duality gap holds for \eqref{condual3} but not for \eqref{hoabui} (see Example \ref{example1} ).
		
	\end{enumerate}
\end{remark}

\begin{example}
	\label{example1}
	Let $\Phi=\Phi_{lsc}$. Let $g(x)=-x^2$ and $f(x)=2x^2$. It is easy to see that $\inf\limits_{x\in \hilbertX}(f(x)+g(x))=0$. For every $\varepsilon>0$, the elements of the set $\partial^{\varepsilon}_{\Phi}g$ are of the form $(a,b)\in \mathbb{R}\times \mathbb{R}$ with $a\geq 1$ and some $b\in \mathbb{R}$, this means that  
	$$
	0\notin \bigcap_{\varepsilon>0}(\partial^{\varepsilon}_{\Phi}f+ \partial^{\varepsilon}_{\Phi}g)(\hilbertX).
	$$
	On the other hand, 
	$$
	{\cal L}(x,1,0)=2x^2-x^2-\sup_{x\in X}\{-x^2+x^2\}=x^2.
	$$
	This means that $\hat{\varphi}\equiv0$ belongs to the set $\text{supp}{\cal L}(x,1,0)$. We have
	$$
	{\cal L}(x,3,0)=2x^2-3x^2-\sup_{x\in X}\{-3x^2+x^2\}=-x^2,
	$$
	and the set $\text{supp}{\cal L}(x,3,0)\neq \emptyset$. Hence the functions $\hat{\varphi}$ and any other $\varphi\in \text{supp}{\cal L}(x,3,0)$ have the intersection property at every level $\alpha<0$.
	
	For any $a\ge 0$, $b\in\mathbb{R}$ and $\tilde{f}=(2-a)x^{2}$ we have
	$$
	\begin{array}{l} 
		g^{*}(a,b)=\sup_{x\in\mathbb{R}}\{bx-ax^{2}+x^{2}\}=\left\{\begin{array}{lll}
			+\infty&\mbox{for}&0\le a< 1 \ \\
			+\infty&\mbox{for}&a=1\ b\neq 0 \ \\
			0&\mbox{for}&a=1\ b= 0,\ \\
			\frac{b^{2}}{4(a-1)}&\mbox{for}&a>1\\
		\end{array}
		\right.\\
		\tilde{f}^{*}(0,-b)=\sup_{x\in\mathbb{R}}\{-bx+ax^{2}-2x^{2}\}=\left\{\begin{array}{lll}
			\frac{-b^{2}}{4(a-2)}&\mbox{for}&0\le a< 2\\
			0&\mbox{for}&a=2,\ b=0\\
			+\infty&\mbox{for}&a=2,\ b\neq 0\\
			+\infty&\mbox{for}&a>2,\\
		\end{array}
		\right.\\
		f^{*}(0,b)=\sup_{x\in\mathbb{R}}\{bx-2x^{2}\}=\frac{b^{2}}{8}.
	\end{array}
	$$
	In consequence,
	$$
	val(ICD)=\sup_{b\in\mathbb{R}}-f^{*}(0,-b) -g^{*}(0,b)=\sup_{b\in\mathbb{R}}-\frac{b^{2}}{8}-\infty=-\infty
	$$
	and
	$$
	val(CD)\begin{array}[t]{l}
		=\sup_{(a,b)\in\mathbb{R}_{+}\times\mathbb{R}}-\tilde{f}^{*}(0,-b)-g^{*}(a,b)\\
		=\max\{0,\sup_{1<a<2\ b\in\mathbb{R}}\frac{b^{2}}{4(a-2)}-\frac{b^{2}}{4(a-1)}\}\\ \\
		=\max\{0,\sup_{1<a<2\ b\in\mathbb{R}}\frac{b^{2}}{4}\frac{1}{(a-1)(a-2)}\}\\
		=0
	\end{array}
	$$
\end{example}

\begin{corollary}
	\label{corzeroduality}
	Let $\hilbertX$  be  a  real vector space.  
	Let $g: \hilbertX \rightarrow (-\infty,+\infty]$ be $\Phi$-convex. 
	If  $\Phi+\Phi\subset\Phi$, $0\in \Phi$, $\Phi=-\Phi$ and $\Phi$ is a convex set, 
	then the intersection property \eqref{eq-11} is equivalent to \eqref{bui}.
	Consequently,  \eqref{hoabui} is equivalent to \eqref{condual3a}, i.e. 
	\begin{equation} 
		\label{hoabuisym}
		val(ICD)
		=\sup\limits_{\varphi\in\Phi}-f_{\Phi}^{*}(-\varphi)-g_{\Phi}^{*}(\varphi)=val(CD^{sym}).
	\end{equation}
\end{corollary}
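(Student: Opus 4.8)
The plan is to derive the equivalence by putting together results already established --- chiefly Theorem~\ref{th_implication} --- and then to obtain the equality of the two dual values directly from the symmetry $\Phi=-\Phi$.

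First I would record what $\Phi$-convexity of $g$ gives. By Theorem~\ref{conju}$(v)$, $g^{**}_{\Phi}=g$ on $\hilbertX$, so \eqref{lag_primal_eq} yields $\sup_{\varphi\in\Phi}{\mathcal L}(x,\varphi)=f(x)+g^{**}_{\Phi}(x)=f(x)+g(x)$ for every $x$; taking the infimum over $x$,
\[
\kappa:=\inf_{x\in\hilbertX}\sup_{\varphi\in\Phi}{\mathcal L}(x,\varphi)=\inf_{x\in\hilbertX}\bigl(f(x)+g(x)\bigr)=val(P),
\]
which is finite because $\dom f\cap\dom g\neq\emptyset$ by the standing assumption. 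Consequently the chain of equalities \eqref{eq_gropa11} --- namely $\inf_{x}(f(x)+g(x))=\inf_{x}(f(x)+g^{**}_{\Phi}(x))=\kappa$ --- holds and $\kappa<+\infty$, so all the hypotheses needed to apply Theorem~\ref{th_implication} are in place; there, condition $(1)$ is \eqref{bui} and condition $(2)$ is the intersection-property condition appearing in \eqref{eq-11} (equivalently \eqref{eq-111}).

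Next I would prove the equivalence. Since $\Phi+\Phi\subset\Phi$, part~2 of Theorem~\ref{th_implication} gives \eqref{bui}$\Rightarrow$\eqref{eq-111}. For the reverse, $\Phi$ is convex, $\Phi=-\Phi$, and \eqref{eq_gropa11} has just been verified, so part~1 of Theorem~\ref{th_implication} gives \eqref{eq-111}$\Rightarrow$\eqref{bui}. This is the asserted equivalence of the intersection property with \eqref{bui}. (Alternatively one could combine Theorem~\ref{lagmin1}, which identifies the intersection property with $val(P)=val(CD)$ --- its hypothesis \eqref{eq_equality} holding since $g$ is $\Phi$-convex, by Proposition~\ref{lagprimalequality} --- with Theorem~\ref{bui3.6}, which identifies \eqref{bui} with $val(P)=val(ICD)<+\infty$, once $val(CD)=val(ICD)$ is known from the next paragraph; but the route through Theorem~\ref{th_implication} is the most direct.)

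Finally, for the ``consequently'' part I would observe that under $0\in\Phi$, $\Phi+\Phi\subset\Phi$ and $\Phi=-\Phi$, the feasible set of \eqref{hoabui} is exactly $\{(\varphi,-\varphi):\varphi\in\Phi\}$: the constraint $\varphi_{1}+\varphi_{2}=0$ forces $\varphi_{2}=-\varphi_{1}$, and $\Phi=-\Phi$ makes $-\varphi_{1}\in\Phi$ automatic. Substituting $\varphi_{2}=-\varphi_{1}$ and renaming the variable,
\[
val(ICD)=\sup_{\varphi\in\Phi}\bigl(-f^{*}_{\Phi}(-\varphi)-g^{*}_{\Phi}(\varphi)\bigr)=val(CD^{sym}),
\]
the last equality being the definition of \eqref{condual3a} (this is already the observation made in (D3)). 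Together with the equivalence above and Theorems~\ref{lagmin1} and~\ref{bui3.6}, both the intersection property and \eqref{bui} then characterise the zero duality gap $val(P)=val(ICD)=val(CD^{sym})$ for this single common dual. I do not anticipate a genuine obstacle here: the only point demanding care is the bookkeeping of the hypotheses of Theorem~\ref{th_implication} --- one must check that $\Phi$-convexity of $g$ really does furnish both $\kappa<+\infty$ and the equality \eqref{eq_gropa11} on which its part~1 rests --- and this is exactly what the first paragraph settles.
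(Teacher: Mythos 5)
Your proposal is correct and follows the same route as the paper, whose proof is simply ``Follows directly from Theorem~\ref{th_implication}''; you usefully spell out the details the paper leaves implicit, namely that $\Phi$-convexity of $g$ gives \eqref{eq_gropa11} and $\kappa<+\infty$ via Theorem~\ref{conju}$(v)$, and that under $\Phi=-\Phi$ the problems \eqref{hoabui} and \eqref{condual3a} coincide as already noted in (D3).
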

\begin{proof}
	Follows directly from Theorem \ref{th_implication}.
\end{proof}

\section{ $\Phi$-Karush-Kuhn-Tucker conditions.}
In this section we provide a characterisation of solutions to $(P)$ and $(CD)/(CD^{sym})$ in terms of the $\Phi$-Karush-Kuhn-Tucker conditions.

Let $\hilbertX$ be a real  vector space. Consider problem \eqref{problema}
\begin{equation}
	\label{problema}
	\tag{P}
	\text{Min}_{x\in \hilbertX}\ \ f(x)+g(x). 
\end{equation}
where $f,g:\hilbertX\rightarrow(-\infty,+\infty]$
are $\Phi$-convex. 

The existence of solutions to the dual problem  $(CD^{sym})$ was  investigated in \cite{JRW} and the following result was proved.
\begin{proposition}(\cite{JRW}, Corollary 5.2)
	Let $\Phi$ be an additive and symmetric set of elementary functions, i.e. $-\varphi\in \Phi$ if $\varphi\in \Phi$. Assume that $f$  and $g$ are $\Phi$-convex.
	If the mapping $\text{supp}(\cdot, \Phi)$  is additive in $f, g$, then
	there exists $\varphi^{*}\in \Phi$,  such that
	$$
	\inf_{x\in X}\{f(x)+g(x)\}=-f_{\Phi}^{*}(\varphi^{*})-g_{\Phi}^{*}(-\varphi^{*})=\sup_{\varphi\in\Phi}\{-f_{\Phi}^{*}(\varphi)-g_{\Phi}^{*}(-\varphi) \}
	$$
\end{proposition}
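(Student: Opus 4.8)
The plan is to derive the equality from two ingredients only: the elementary form of weak duality, and the observation that since the optimal value is a \emph{constant} elementary function supporting $f+g$, the additivity hypothesis lets us split it across $f$ and $g$.

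I would first record a consequence of the standing hypotheses. Since $f$ is proper and $\Phi$-convex, $\text{supp}_\Phi(f)\neq\emptyset$, so $\Phi\neq\emptyset$; picking any $\varphi_0\in\Phi$, symmetry gives $-\varphi_0\in\Phi$ and additivity gives $0=\varphi_0+(-\varphi_0)\in\Phi$. As $\Phi$ is closed under addition of real constants, every constant function belongs to $\Phi$. Next, for every $\varphi\in\Phi$,
\[
-f_\Phi^*(\varphi)-g_\Phi^*(-\varphi)=\inf_{x\in\hilbertX}(f(x)-\varphi(x))+\inf_{x\in\hilbertX}(g(x)+\varphi(x))\le\inf_{x\in\hilbertX}(f(x)+g(x))=:m,
\]
so $\sup_{\varphi\in\Phi}(-f_\Phi^*(\varphi)-g_\Phi^*(-\varphi))\le m$ and it remains to produce a single $\varphi^{*}\in\Phi$ attaining the value $m$.

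Assume first that $m$ is finite. Then the constant function equal to $m$ lies in $\Phi$ and satisfies $m\le f(x)+g(x)$ for all $x$, i.e. $m\in\text{supp}_\Phi(f+g)$. Reading ``$\text{supp}(\cdot,\Phi)$ additive in $f,g$'' as $\text{supp}_\Phi(f+g)=\text{supp}_\Phi(f)+\text{supp}_\Phi(g)$ (only the inclusion $\subset$ is needed here), there exist $\varphi^{*}\in\text{supp}_\Phi(f)$ and $\varphi_2\in\text{supp}_\Phi(g)$ with $\varphi^{*}+\varphi_2=m$. From $\varphi^{*}\le f$ we get $f_\Phi^*(\varphi^{*})\le 0$ (and $f_\Phi^*(\varphi^{*})>-\infty$ since $f$ is proper), and from $\varphi_2=m-\varphi^{*}\le g$ we get $-\varphi^{*}(x)-g(x)\le -m$ for all $x$, hence $g_\Phi^*(-\varphi^{*})\le -m$. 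Adding, $-f_\Phi^*(\varphi^{*})-g_\Phi^*(-\varphi^{*})\ge m$, which together with the weak inequality above forces equality; thus the supremum is attained at $\varphi^{*}$ and equals $m=\inf_{x\in\hilbertX}(f(x)+g(x))$.

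For the remaining case $m=-\infty$ (which is consistent with $\dom f\cap\dom g\neq\emptyset$) I would take any $\varphi^{*}\in\text{supp}_\Phi(f)$: then $-\varphi^{*}(x)-g(x)\ge-(f(x)+g(x))$ for all $x$, so $g_\Phi^*(-\varphi^{*})\ge-\inf_{x\in\hilbertX}(f(x)+g(x))=+\infty$ while $f_\Phi^*(\varphi^{*})$ is finite, whence $-f_\Phi^*(\varphi^{*})-g_\Phi^*(-\varphi^{*})=-\infty=m$, trivially the maximum. I do not expect a genuine obstacle here: the whole argument rests on the single point that the finite optimal value, viewed as a constant elementary function supporting $f+g$, decomposes along $f$ and $g$, and the only real care needed is in fixing the precise meaning of the additivity hypothesis and checking that it supplies the inclusion used above.
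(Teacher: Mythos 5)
Your argument is correct, and there is nothing in the paper to compare it with line by line: the paper states this proposition as a quoted result (\cite{JRW}, Corollary 5.2) and gives no proof, so your write-up is a genuine reconstruction. The two ingredients you use are exactly the ones behind the cited result. Your weak-duality step is sound (note $-f_{\Phi}^{*}(\varphi)<+\infty$ and $-g_{\Phi}^{*}(-\varphi)<+\infty$ by properness, so no $\infty-\infty$ issue), and the key step --- the constant function $m=\inf_{x}(f(x)+g(x))$ lies in $\Phi$ (since $0\in\Phi$ by symmetry plus additivity, and $\Phi$ is closed under adding constants by the standing assumption (a)), belongs to $\text{supp}_{\Phi}(f+g)$, and is then split as $\varphi^{*}+\varphi_{2}$ with $\varphi^{*}\le f$, $\varphi_{2}=m-\varphi^{*}\le g$ --- is precisely the point where the hypothesis ``$\text{supp}(\cdot,\Phi)$ additive in $f,g$'' is needed; your reading of it as the inclusion $\text{supp}_{\Phi}(f+g)\subset\text{supp}_{\Phi}(f)+\text{supp}_{\Phi}(g)$ is the right one (the reverse inclusion is automatic from $\Phi+\Phi\subset\Phi$). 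The estimates $f_{\Phi}^{*}(\varphi^{*})\le 0$ and $g_{\Phi}^{*}(-\varphi^{*})\le -m$ then force equality with the weak-duality bound, and your treatment of $m=-\infty$ is also fine. The difference from the source is only organizational: in \cite{JRW} the statement is a corollary of an exact formula for $(f+g)_{\Phi}^{*}$ as the $\Phi$-infimal convolution $f_{\Phi}^{*}\oplus g_{\Phi}^{*}$ (with attainment), evaluated at $\varphi=0$, whereas you prove the $\varphi=0$ instance directly; your route is more elementary and self-contained, the route via the inf-convolution formula gives the stronger statement for all $\varphi$, which is what \cite{bui2020zero} and the (ICD) discussion in this paper build on.
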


\begin{definition}
	Let $\Phi$ be symmetric i.e.  $\Phi=-\Phi$. We say that $x^{*}\in \hilbertX$ and $\varphi^{*}\in\Phi$ satisfy the {\em $\Phi$-Karush-Kuhn-Tucker conditions} $(KKT)$ for  the pair of dual problems $(P)$ and  $(CD^{sym})$   if
	\begin{equation} 
		\label{solvability2} 
		\tag{KKT}
		-\varphi^*\in\partial_{\Phi} f(x^{*}),\ \ \ x^{*}\in\partial_{\hilbertX} g_{\Phi}^{*}(\varphi^{*}).
	\end{equation}
\end{definition}

\begin{theorem}
	\label{kkt}
	Let $\hilbertX$ be a vector space and $\Phi$ be a symmetric set of elementary functions. 
	Let $f,g:\hilbertX\rightarrow(-\infty,+\infty]$ be $\Phi$-convex functions. Let $x^{*}\in\hilbertX$ and $\varphi^{*}\in\Phi$.
	
	The following conditions are equivalent.
	\begin{description}
		\item[(i)] $x^{*}$ and $\varphi^{*}$ are solutions to $(P)$ and $(CD^{sym})$, respectively, i.e.
		\begin{equation} 
			\label{solvability22} 
			\inf\limits_{x\in\hilbertX} (f(x)+g(x))=f(x^{*})+g(x^{*})=\sup_{\varphi\in\Phi}(-f_{\Phi}^{*}(-\varphi)-g_{\Phi}^{*}(\varphi)) =-f_{\Phi}^{*}(-\varphi^*)-g_{\Phi}^{*}(\varphi^*) 
		\end{equation}
		\item[(ii)]  $x^{*}$ and $\varphi^{*}$  satisfy the $\Phi$-KKT conditions, i.e.,
		\begin{equation} 
			\label{solvability223} 
			-\varphi^*\in\partial_{\Phi} f(x^{*}),\ \ \ x^{*}\in\partial_{\hilbertX} g_{\Phi}^{*}(\varphi^{*}).
		\end{equation}
	\end{description}
\end{theorem}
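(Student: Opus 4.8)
The plan is to establish the equivalence $(i)\Leftrightarrow(ii)$ by relating the ``sum of two inequalities'' characterization of optimality for $(P)$ to the Fenchel--Moreau/Young inequality for $f$ and $g$ separately, using the symmetry of $\Phi$ to pass between $\varphi^*$ and $-\varphi^*$. The key observation is that, by Theorem \ref{conju}$(i)$ applied to $f$ at $x^*$ with the elementary function $-\varphi^*\in\Phi$ (here symmetry is used), and to $g$ at $x^*$ with $\varphi^*\in\Phi$, we have the two Fenchel--Moreau inequalities
\begin{equation}
	\label{kkt_fm1}
	f(x^*)+f_\Phi^*(-\varphi^*)\ge -\varphi^*(x^*),\qquad g(x^*)+g_\Phi^*(\varphi^*)\ge \varphi^*(x^*).
\end{equation}
Adding these gives $f(x^*)+g(x^*)\ge -f_\Phi^*(-\varphi^*)-g_\Phi^*(\varphi^*)$, and, combined with weak duality $\inf_{x}(f(x)+g(x))\ge \sup_{\varphi}(-f_\Phi^*(-\varphi)-g_\Phi^*(\varphi))$ from \eqref{weak_duality}/\eqref{cd}, we see that the chain \eqref{solvability22} holds \emph{if and only if both inequalities in \eqref{kkt_fm1} hold with equality}. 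So the whole theorem reduces to showing that each of the two equalities in \eqref{kkt_fm1} is equivalent to the corresponding membership in \eqref{solvability223}.

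For the second equality, $g(x^*)+g_\Phi^*(\varphi^*)=\varphi^*(x^*)$ is exactly condition $(i)$ of Proposition \ref{you} applied to the $\Phi$-convex function $g$ with $\bar\varphi=\varphi^*$, $\bar x=x^*$; by the equivalence $(i)\Leftrightarrow(iii)$ of that proposition this is the same as $x^*\in\partial_\hilbertX g_\Phi^*(\varphi^*)$, which is the second part of \eqref{solvability223}. For the first equality, $f(x^*)+f_\Phi^*(-\varphi^*)=-\varphi^*(x^*)$; since $-\varphi^*\in\Phi$ by symmetry, this is condition $(i)$ of Proposition \ref{you} applied to the $\Phi$-convex function $f$ with $\bar\varphi=-\varphi^*$, $\bar x=x^*$, and the equivalence $(i)\Leftrightarrow(ii)$ there gives $-\varphi^*\in\partial_\Phi f(x^*)$, the first part of \eqref{solvability223}. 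Thus $(i)$ (i.e. \eqref{solvability22}) is equivalent to the conjunction of the two equalities in \eqref{kkt_fm1}, which is equivalent to \eqref{solvability223}, i.e. to $(ii)$.

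The only genuinely delicate point is the bookkeeping around which inequality is ``$\le$'' and which is ``$\ge$'': one must be careful that \eqref{solvability22} forces \emph{each} of the two summed Fenchel--Moreau inequalities to be tight rather than merely their sum — this is immediate here because each summand inequality points the same way ($\ge$) and their sum is forced to equality, so neither can have slack. A secondary point is making sure Proposition \ref{you} is applicable, which requires $f$ and $g$ to be $\Phi$-convex (assumed in the hypotheses of this section and of the theorem) and $-\varphi^*\in\Phi$ (guaranteed by $\Phi=-\Phi$). No minimax machinery or zero-duality-gap hypothesis is needed: the argument is entirely a two-function Fenchel--Young computation combined with Proposition \ref{you}. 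I would also remark that the supremum in \eqref{solvability22} is attained at $\varphi^*$ precisely because equality holds, so no separate existence argument is required.
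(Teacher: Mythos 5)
Your proof is correct, and it is organized differently from the paper's. The paper treats the two implications with different machinery: for $(i)\Rightarrow(ii)$ it substitutes $g=g^{**}_{\Phi}$ (resp.\ $f=f^{**}_{\Phi}$) into \eqref{solvability22}, uses the definition of the biconjugate as a supremum to extract an inequality, and then tightens it with Fenchel--Moreau; for $(ii)\Rightarrow(i)$ it sums the two equalities from Proposition \ref{you} and then verifies attainment separately --- the maximality of $\varphi^*$ by adding the subgradient inequalities $x^*\in\partial_{\hilbertX}f^*_{\Phi}(-\varphi^*)$ and $x^*\in\partial_{\hilbertX}g^*_{\Phi}(\varphi^*)$, and the minimality of $x^*$ via $0\in\partial_{\Phi}f(x^*)+\partial_{\Phi}g(x^*)$. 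You instead reduce the whole theorem to a single observation: the duality chain \eqref{solvability22} holds iff the two Fenchel--Moreau inequalities at $(x^*,-\varphi^*)$ and $(x^*,\varphi^*)$ are simultaneously tight (since both point the same way and weak duality pins down the middle terms), and each tightness is converted to the corresponding membership in \eqref{solvability223} by Proposition \ref{you}, with symmetry of $\Phi$ supplying $-\varphi^*\in\Phi$. This buys a shorter, symmetric argument that avoids the biconjugate substitutions and makes the role of weak duality explicit; what the paper's route buys is the extra information produced along the way (the explicit subgradient verification of maximality and the relation $0\in\partial_{\Phi}f(x^*)+\partial_{\Phi}g(x^*)$), and it never needs to invoke weak duality. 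One small caveat: \eqref{weak_duality} and \eqref{cd} are derived in Section 4 under the additional hypothesis $0\in\Phi$, which Theorem \ref{kkt} does not assume; this is harmless because the inequality you actually need, $\inf_x(f(x)+g(x))\ge\sup_{\varphi\in\Phi}(-f^*_{\Phi}(-\varphi)-g^*_{\Phi}(\varphi))$, follows by applying your two displayed Fenchel--Moreau inequalities at arbitrary $x$ and $\varphi$ and taking the infimum and supremum --- it would be cleaner to say that than to cite \eqref{weak_duality}.
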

\begin{proof}
	Assume that  \eqref{solvability22} holds, i.e.
	\begin{equation}
		\label{equ123}
		f(x^*)+g(x^*)=-f_{\Phi}^*(-\varphi^*)-g_{\Phi}^*(\varphi^*).
	\end{equation}
	By Theorem \ref{conju} and the $\Phi$-convexity of $g$, \eqref{solvability22} is equivalent to
	$$
	f(x^*)+g^{**}_{\Phi}(x^*)=-f_{\Phi}^*(-\varphi^*)-g_{\Phi}^*(\varphi^*).
	$$
	By the definition of $g^{**}_{\Phi}(x^*)$,
	$f(x^*)+\varphi^*(x^*)-g_{\Phi}^*(\varphi^*)\leq -f_{\Phi}^*(-\varphi^*)-g_{\Phi}^*(\varphi^*)$, i.e.,
	$$
	f(x^*)+f_{\Phi}^*(-\varphi^*)\leq -\varphi^*(x^*).
	$$
	This, together with the Fenchel-Moreau inequality yields to  $
	f(x^*)+f_{\Phi}^*(-\varphi^*)= -\varphi^*(x^*)
	$ i.e. $-\varphi^*\in\partial_{\Phi} f(x^{*})$.
	
	Analogously, by replacing in  \eqref{equ123} function $f$ with $f_{\Phi}^{**}(x^*)$ we obtain $x^*\in \partial_{\hilbertX}g^*_{\Phi}(\varphi^*)$.

	Assume now that the conditions \eqref{solvability223} hold. By the Proposition \ref{you}, 
	\begin{equation}
		\label{eqf1}
		-\varphi^*(x^*)=f_{\Phi}^*(-\varphi^*)+f(x^*)
	\end{equation}
	and
	\begin{equation}
		\label{eqg1}
		\varphi^*(x^*)=g_{\Phi}^*(\varphi^*) +g(x^*).
	\end{equation}
	Hence,
	$$
	f(x^*)+g(x^*)=-f_{\Phi}^*(-\varphi^*)-g_{\Phi}^*(\varphi^*).
	$$
	From  \eqref{eqg1} we get  $\varphi^*\in \partial_{\Phi} g(x^*)$, which means that $0\in \partial_{\Phi} f(x^*)+\partial _{\Phi}g(x^*)\subset \partial_{\Phi} (f+g)(x^*)$. 
	From the equality \eqref{eqf1} we have $x^*\in \partial_{\hilbertX} f_{\Phi}^*(-\varphi^*)$, this, together  with the assumption that $x^{*}\in\partial_{\hilbertX} g_{\Phi}^{*}(\varphi^{*})$, yields to
	$$
	f_{\Phi}^*(-\varphi)+g_{\Phi}^*(\varphi)-f_{\Phi}^*(-\varphi^*)-g_{\Phi}^*(\varphi^*)\geq -\varphi(x^*)+\varphi^*(x^*)+\varphi(x^*)-\varphi^*(x^*)\ \ \ \forall \ \ \ \varphi\in \Phi.
	$$
	Equivalently,
	$-f_{\Phi}^*(-\varphi)-g_{\Phi}^*(\varphi)\leq -f_{\Phi}^*(-\varphi^*)-g_{\Phi}^*(\varphi^*) $ for all $\varphi\in \Phi$
	which means that $-f_{\Phi}^*(-\varphi^*)-g_{\Phi}^*(\varphi^*)=\sup\limits_{\varphi\in\Phi}(-f_{\Phi}^{*}(-\varphi)-g_{\Phi}^{*}(\varphi))$. This completes the proof. 
	
\end{proof}

\subsection{KKT for $\Phi= \Phi_{lsc}$}

Let $\hilbertX$ be a Hilbert space. In the present subsection we prove a variant of Theorem \ref{kkt} with $f$ and $g$ which are $\Phi_{lsc}$-convex, where \begin{equation} 
	\label{philsc1}
	\Phi_{lsc}=\{\varphi:\hilbertX\rightarrow\mathbb{R}\ \mid\ \varphi(x):=-a\|x\|^{2}+\langle v,x\rangle +c,\ a\ge 0,\ v\in\hilbertX,\ c\in\mathbb{R}\}.
\end{equation}
The set $\Phi_{lsc}$ is nonsymmetric and forms a non  pointed cone with the lineality space $L=\hilbertX$. By Proposition 6.3 of \cite{rubbook}, the class of $\Phi_{lsc}$-convex functions defined on Hilbert space $\hilbertX$ coincides with the class of all lower semicontinuous functions minorized by a function $\varphi\in\Phi_{lsc}$. Clearly, the class $\Phi_{lsc}$ is additive and the sum $f+g$ of any  $\Phi_{lsc}$-convex functions $f$ and $g$ is a $\Phi_{lsc}$ function.

Recall that the $\Phi_{lsc}$-conjugate dual to problem $(P)$ with $\Phi_{lsc}$-convex functions $f$ and $g$ has the form \eqref{conjugate_dual1}
\begin{equation} 
	\label{conjugate_dual11}
	\tag{$CD^{lsc}$}
	\text{Max\,}_{(a,w)\in\Phi_{lsc}} -(\tilde{f}_{(a,w))})_{\Phi_{lsc}}^{*}(0, -w)-g_{\Phi_{lsc}}^{*}(a,w),
\end{equation}
where  functions of the form $\psi_{1}(x):=-a\|x\|^{2}+\langle w,x\rangle$  are identified with  pairs $(a,w)$, $a\ge 0$, $w\in\hilbertX$ and $\tilde{f}_{\psi_{1}}(x)=\tilde{f}_{(a,w)}(x):=f(x)-a\|x\|^{2}$.

\begin{theorem}
	\label{kktlsc}
	Let $\hilbertX$ be a Hilbert space. Let $f,g:\hilbertX\rightarrow(-\infty,+\infty]$ be  $\Phi_{lsc}$-convex functions. 
	
	In order that $x^{*}\in\hilbertX$ and $\varphi^{*}=(a^{*},w^{*})\in\Phi_{lsc}$  be  such that
	\begin{equation} 
		\label{solvabilitylsc1} 
		\begin{array}{l}
			\inf\limits_{x\in\hilbertX} (f(x)+g(x))=f(x^{*})+g(x^{*})=\sup\limits_{(a,w)\in\Phi_{lsc}}(-(\tilde{f}_{(a,w)})_{\Phi_{lsc}}^{*}(0,-w)-g_{\Phi_{lsc}}^{*}(0,w))\\ =-(\tilde{f}_{(a^{*},w^{*})})_{\Phi_{lsc}}^{*}(0,-w^{*})-g_{\Phi_{lsc}}^{*}(a^{*},w^{*}), 
		\end{array}
	\end{equation}
	i.e. $x^{*}\in\hilbertX$ solves $(P)$ and $\varphi^{*}=(a^{*},w^{*})\in\Phi_{lsc}$ solves \eqref{conjugate_dual11}
	it is necessary and sufficient that $x^{*}\in \hilbertX$ and $\varphi^{*}\in\Phi_{lsc}$ satisfy the $\Phi$-Karush-Kuhn-Tucker conditions \eqref{solvability2},
	\begin{equation} 
		\label{solvabilitylsc} 
		(0,-w^{*})\in\partial_{lsc}\tilde{f}(x^{*}),\ \ \ x^{*}\in\partial_{\hilbertX}g_{\Phi_{lsc}}^{*}(\varphi^{*})
	\end{equation}
	where $\partial_{lsc}$ denotes the $\Phi_{lsc}$-subgradient.
\end{theorem}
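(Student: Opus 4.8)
The plan is to mimic the proof of Theorem~\ref{kkt}, transporting it to the nonsymmetric class $\Phi_{lsc}$ by working with the modified function $\tilde f=\tilde f_{(a^*,w^*)}=f(\cdot)-a^*\|\cdot\|^2$ and the affine elementary function $(0,-w^*)$ in place of $-\varphi^*$. The crucial algebraic observation, already recorded in \eqref{conjmain1} and the Example following \eqref{lagconj}, is that for $\varphi=(a,w)\in\Phi_{lsc}$ one has $\inf_{x\in\hilbertX}{\cal L}(x,\varphi)=-(\tilde f_{(a,w)})^*_{\Phi_{lsc}}(0,-w)-g^*_{\Phi_{lsc}}(a,w)$, and moreover $(\tilde f_{(a,w)})^*_{\Phi_{lsc}}(0,-w)=f^*_{\Phi_{lsc}}((a,w))$ up to the identification of constants; so the dual objective in \eqref{solvabilitylsc1} is exactly ${}^{*}f_{\Phi_{lsc}}(\varphi)+g^*_{\Phi_{lsc}}(\varphi)$ in disguise. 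Consequently the Fenchel--Moreau inequality \eqref{eq-eps_con1} applied to $\tilde f$ with the affine elementary function $(0,-w^*)$ reads $\tilde f(x^*)+(\tilde f)^*_{\Phi_{lsc}}(0,-w^*)\ge -\langle w^*,x^*\rangle$, i.e. $\tilde f(x^*)+(\tilde f)^*_{\Phi_{lsc}}(0,-w^*)\ge (0,-w^*)(x^*)$, and the $\Phi_{lsc}$-subgradient relation $(0,-w^*)\in\partial_{lsc}\tilde f(x^*)$ is precisely equality here (Proposition~\ref{you} applied to $\tilde f$, which is $\Phi_{lsc}$-convex by Proposition~6.3 of \cite{rubbook} since $f$ is).

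First I would prove $(i)\Rightarrow(ii)$. Starting from \eqref{solvabilitylsc1}, use $\Phi_{lsc}$-convexity of $g$ and Theorem~\ref{conju}$(v)$ to replace $g(x^*)$ by $g^{**}_{\Phi_{lsc}}(x^*)=\sup_{\varphi\in\Phi_{lsc}}(\varphi(x^*)-g^*_{\Phi_{lsc}}(\varphi))\ge \varphi^*(x^*)-g^*_{\Phi_{lsc}}(\varphi^*)$. Plugging this lower bound into the equality $f(x^*)+g(x^*)=-(\tilde f_{(a^*,w^*)})^*_{\Phi_{lsc}}(0,-w^*)-g^*_{\Phi_{lsc}}(\varphi^*)$ and cancelling $g^*_{\Phi_{lsc}}(\varphi^*)$ gives $f(x^*)+\varphi^*(x^*)\le -(\tilde f_{(a^*,w^*)})^*_{\Phi_{lsc}}(0,-w^*)$; since $f(x^*)+\varphi^*(x^*)=f(x^*)-a^*\|x^*\|^2+\langle w^*,x^*\rangle=\tilde f(x^*)-(0,-w^*)(x^*)$, this is exactly $\tilde f(x^*)+(\tilde f)^*_{\Phi_{lsc}}(0,-w^*)\le (0,-w^*)(x^*)$, which together with Fenchel--Moreau forces equality, hence $(0,-w^*)\in\partial_{lsc}\tilde f(x^*)$ by Proposition~\ref{you}. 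Symmetrically, replacing $\tilde f(x^*)$ by $(\tilde f)^{**}_{\Phi_{lsc}}(x^*)$ (legitimate since $\tilde f$ is $\Phi_{lsc}$-convex) and repeating the cancellation yields $x^*\in\partial_{\hilbertX}g^*_{\Phi_{lsc}}(\varphi^*)$; this is the KKT pair \eqref{solvabilitylsc}.

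For $(ii)\Rightarrow(i)$, I would run the argument in reverse, again following Theorem~\ref{kkt}. From $(0,-w^*)\in\partial_{lsc}\tilde f(x^*)$, Proposition~\ref{you} (applied to $\tilde f$) gives the two dual equalities $\tilde f(x^*)+(\tilde f)^*_{\Phi_{lsc}}(0,-w^*)=(0,-w^*)(x^*)=-\langle w^*,x^*\rangle$ and $x^*\in\partial_{\hilbertX}(\tilde f)^*_{\Phi_{lsc}}(0,-w^*)$; from $x^*\in\partial_{\hilbertX}g^*_{\Phi_{lsc}}(\varphi^*)$ together with Proposition~\ref{you} applied to $g$ we get $\varphi^*(x^*)=g^*_{\Phi_{lsc}}(\varphi^*)+g(x^*)$ and $\varphi^*\in\partial_{\Phi_{lsc}}g(x^*)$. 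Adding the two equalities and translating back through $\tilde f(x^*)=f(x^*)-a^*\|x^*\|^2$, $\varphi^*(x^*)=-a^*\|x^*\|^2+\langle w^*,x^*\rangle$ gives $f(x^*)+g(x^*)=-(\tilde f_{(a^*,w^*)})^*_{\Phi_{lsc}}(0,-w^*)-g^*_{\Phi_{lsc}}(\varphi^*)$, which is weak-duality-tight, so $x^*$ solves $(P)$ and $\varphi^*$ solves \eqref{conjugate_dual11}; the supremum being attained follows from the subgradient inequality for $g^*_{\Phi_{lsc}}$ exactly as in the last display of the proof of Theorem~\ref{kkt}, using that any competing $\varphi=(a,w)$ satisfies $-(\tilde f_{(a,w)})^*_{\Phi_{lsc}}(0,-w)-g^*_{\Phi_{lsc}}(a,w)\le -(\tilde f_{(a^*,w^*)})^*_{\Phi_{lsc}}(0,-w^*)-g^*_{\Phi_{lsc}}(\varphi^*)$.

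The main obstacle is bookkeeping rather than conceptual: because $\Phi_{lsc}$ is nonsymmetric one cannot literally write $-\varphi^*\in\Phi_{lsc}$, so the subgradient condition must be phrased through $\tilde f$ and the affine part $(0,-w^*)$, and one must be careful that the quadratic coefficient $a^*$ is absorbed into $\tilde f$ consistently on both sides (this is exactly the content of \eqref{conjmain1}, where constants are dropped and $\varphi_1\leftrightarrow(a,v)$). One should also double-check that $\partial_{lsc}\tilde f(x^*)\ne\emptyset$ forces $x^*\in\dom\tilde f=\dom f$, and that $\tilde f$ is indeed $\Phi_{lsc}$-convex (Proposition~6.3 of \cite{rubbook}) so that Proposition~\ref{you} and Theorem~\ref{conju}$(v)$ apply to it; once these identifications are in place the proof is the same cancellation argument as Theorem~\ref{kkt}.
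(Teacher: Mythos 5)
Your proposal is correct and follows essentially the same route as the paper's proof: both directions reduce the KKT inclusions to the Fenchel--Moreau equalities for $\tilde f_{(a^{*},w^{*})}$ and $g$ via Proposition~\ref{you}, use $g=g^{**}_{\Phi_{lsc}}$ (resp.\ $\tilde f_{(a^{*},w^{*})}=(\tilde f_{(a^{*},w^{*})})^{**}_{\Phi_{lsc}}$) together with the definition of the biconjugate for the direction from \eqref{solvabilitylsc1} to \eqref{solvabilitylsc}, and invoke weak duality \eqref{pdineq} to obtain the tight equality and attainment in the other direction, exactly as the paper does. The only blemish is the parenthetical claim that $(\tilde f_{(a,w)})^{*}_{\Phi_{lsc}}(0,-w)$ equals $f^{*}_{\Phi_{lsc}}(a,w)$ --- it is ${}^{*}f_{\Phi_{lsc}}(a,w)$, as your next clause correctly states --- which does not affect the argument.
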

\begin{proof}
	By Proposition \ref{you}, the inclusions in formulae \eqref{solvabilitylsc} are respectively  equivalent to the following  equalities
	\begin{equation}
		\label{eqflsc}
		-\langle w^{*},x^{*}\rangle=(\tilde{f}_{(a^{*},w^{*})})_{\Phi_{lsc}}^*(0,-w^{*})+\tilde{f}(x^*)=(\tilde{f}_{(a^{*},w^{*})})_{\Phi_{lsc}}^*(0,-w^{*})+f(x^*)-a^{*}\|x^{*}\|^{2},
	\end{equation}
	where $\tilde{f}_{(a^{*},w^{*})}(x^{*}):=f(x^{*})-a^{*}\|x^{*}\|^{2}$ and
	$\varphi^*(x^*)=-a^{*}\|x^{*}\|^{2}+\langle w^{*},x^{*}\rangle=g_{lsc}^*(\varphi^*) +g(x^*).$
	Consequently,
	$$
	(\tilde{f}_{(a^{*},w^{*})})_{\Phi_{lsc}}^{*}(0,-w^{*})+f(x^*)=-g_{\Phi_{lsc}}^*(\varphi^*)-g(x^*)
	$$
	and, in view of \eqref{pdineq}, we obtain
	\eqref{solvabilitylsc1}.
	
	Assume now that  \eqref{solvabilitylsc1} holds with $\varphi^{*}(x):=-a^{*}\|x\|^{2}+\langle w^{*},x\rangle$, i.e.
	\begin{equation}
		\label{equ1234}
		f(x^*)+g(x^*)=-(\tilde{f}_{(a^{*},w^{*})})_{\Phi_{lsc}}^*(0,-w^{*})-g_{\Phi_{lsc}}^*(\varphi^*),
	\end{equation}
	where 
	$$
	(\tilde{f}_{(a^{*},w^{*})})_{\Phi_{lsc}}^*(0,-w^{*})=\sup\limits_{x\in \hilbertX} -\langle w^{*},x\rangle-(f(x)+a^{*}\|x\|^{2})
	$$
	and, as previously,  $\tilde{f}_{\varphi}(\cdot)=\tilde{f}_{(a,w)}(\cdot)=f(\cdot)-a\|\cdot\|^{2}$ for any $\varphi\in\Phi$.
	By Theorem \ref{conju} and the $\Phi$-convexity of $g$, \eqref{equ1234} is equivalent to
	$$
	f(x^*)+g_{\Phi_{lsc}}^{**}(x^*)=-(\tilde{f}_{(a^{*},w^{*})})_{\Phi_{lsc}}^*(0,-w^{*})-g_{\Phi_{lsc}}^*(\varphi^*).
	$$
	By  definition of $g_{\Phi_{lsc}}^{**}(x^*)$,
	$f(x^*)+\varphi^*(x^*)-g_{\Phi_{lsc}}^*(\varphi^*)\leq -(\tilde{f}_{(a^{*},w^{*})})_{\Phi_{lsc}}^*(0,-w^{*})-g_{\Phi_{lsc}}^*(\varphi^*)$, i.e.,
	$$
	f(x^*)+(\tilde{f}_{(a^{*},w^{*})})_{\Phi_{lsc}}^*(0,-w^{*})\leq -\varphi^*(x^*).
	$$
	Hence, $\tilde{f}(x^*)+(\tilde{f}_{(a^{*},w^{*})})_{\Phi_{lsc}}^*(0,-w^{*})\leq -\langle w^*,x^*\rangle$ and,
	by Fenchel-Moreau inequality, 
	$\tilde{f}_{(a^{*},w^{*})}(x^*)+(\tilde{f}_{(a^{*},w^{*})})_{\Phi_{lsc}}^*(0,-w^{*})= -\langle w^*,x^*\rangle$
	i.e. $(0,-w^{*})\in\partial_{lsc} \tilde{f}_{(a^{*},w^{*})}(x^{*})$.
	
	Analogously, by \eqref{equ1234},
	$\tilde{f}_{(a^{*},w^{*})}(x^*)+a^{*}\|x\|^{2}+g(x^*)=-(\tilde{f}_{(a^{*},w^{*})})_{\Phi_{lsc}}^*(0,-w^{*})-g_{\Phi_{lsc}}^*(\varphi^*)$, and
	since $\tilde{f}$ is $\Phi_{lsc}$ convex, $\tilde{f}_{(a,w)}=(\tilde{f}_{(a,w)})_{\Phi_{lsc}}^{**}$, and
	$$
	-(\tilde{f}_{(a^{*},w^{*})})^{*}_{\Phi_{lsc}}(0,-w^{*})-\langle w^{*},x^{*}\rangle +a^{*}\|x^{*}\|^{2}+g(x^{*})\le -(\tilde{f}_{(a^{*},w^{*})})_{\Phi_{lsc}}^{*}(0,-w^{*})-g_{\Phi_{lsc}}^{*}(\varphi^{*}),
	$$
	i.e. $g(x^{*})+g_{\Phi_{lsc}}^{*}(\varphi^{*})\le \varphi^{*}(x^{*})$ which, together with Fenchel-Moreau inequality, gives
	$x^*\in \partial_{\hilbertX}g_{\Phi_{lsc}}^*(\varphi^*)$.
\end{proof}

The example below illustrates Theorem \ref{kktlsc}.

\begin{example}
	Let $\hilbertX=\mathbb{R}$. Let $f,g:\mathbb{R}\rightarrow\mathbb{R}$ be given by the following formulas
	$$
	f(x)=\left \{ 
	\begin{matrix}
		2(x-1)^2 & \ \ \text{for} \ \ x\geq 0\cr
		2(x+1)^2 & \ \ \ \text{for} \ \ x<0\cr
	\end{matrix}
	\right., \ \ \ \ \ \ \ g(x)=-x^2.
	$$
	It is easy to see that $f$ and $g$ are nonconvex but $\Phi_{lsc}$-convex, where the set $\Phi_{lsc}$ is defined by \eqref{philsc}.
	Functions $\varphi\in \Phi_{lsc}$ such that $c=0$ will be 
	identified with pairs $(a,v)$, $a,v\in\mathbb{R}$.
	
	Consider the problem
	$$
	\inf_{x\in \mathbb{R}}\{f(x)+g(x)\}.
	$$
	Let $\varphi^*(x)=-x^2$, which we identify with the pair $(1,0)$, it is easy to see that
	$$
	f(x)-f(2)\geq x^2-(2)^2
	$$
	i.e. $(-1,0)\in \partial_{lsc}f(2)$. Now we show that $2\in\partial_{\hilbertX}g_{lsc}^*(1,0)$ i.e that the KKT conditions hold and this means,  that $\varphi^*(x)=-x^2$ and $x^*=2$ are the solutions of the dual and primal problems, respectively. By simple calculations we get
	$$
	g^*(a,b)=\sup_{x\in\mathbb{R}}\{ -ax^2+bx+x^2\}=\left \{ 
	\begin{matrix}
		+\infty & \ \ \text{for} \ \ a\leq 1,b\neq 0\cr
		0 & \ \ \ \text{for} \ \ a=1,b=0\cr
		\frac{-b^2}{4(1-a)}&	\ \ \ \text{for} \ \ a>1,b\neq0\end{matrix}
	\right.,
	$$
	and it is easy to see that $2\in\partial_{\hilbertX}g_{lsc}^*(1,0)$ and $(0,0)\in \partial_{lsc}\tilde{f}(2)$, where $\tilde{f}_{(a,v)}=f-a\|\cdot\|^{2}$. 
\end{example}

\section{Conclusions}
In conclusion, Theorem \ref{lagmin1} provides sufficient and necessary conditions for zero duality gap for primal \eqref{problem},  $\Phi$-Lagrangian \eqref{lagprimal}, $\Phi$-Lagrangian dual \eqref{lagdual} and $\Phi$-conjugate dual \eqref{condual3} problems,  for a suitably defined $\Phi$-Lagrangian function  when $0\in \Phi$, and $\Phi$ is a convex set. Theorem \ref{lagmin1}, together with Theorem \ref{kkt} and Theorem \ref{kktlsc} reveal the importance of properties of the elementary functions $\Phi$ in general duality theory.

\bibliographystyle{siam} 
\bibliography{bibn}

\end{document}